\newtheorem{theorem}{Theorem}[section]
\newtheorem{lemma}[theorem]{Lemma}
\newtheorem{proposition}[theorem]{Proposition}
\newtheorem{corollary}[theorem]{Corollary}
\newenvironment{proof}{\noindent\emph{Proof.}\hspace{.25em}}{\hspace*{\fill}
$\Box$\newline}
\def\pfssp{\hskip 0.5em}
\newcommand{\pff}[2]{\noindent{\it Proof~of\,\,#1\,:} \pfssp #2 \qed \smallskip}
\def \qed {\hfill $\Box$}
\def \QD1 {\hfill $\spadesuit$}
\newcommand{\case}[2]{\smallskip {\bf Case #1\/:} {\it #2}}
\newcommand{\set}[2]{\{#1 \;|\; #2 \}}
\newcommand{\ems}{\varnothing}
\newcommand{\sm}{\setminus}
\newcommand{\De}{\Delta}
\newcommand{\de}{\delta}
\newcommand{\ep}{\varepsilon}
\newcommand{\sg}{\sigma}
\newcommand{\scn}{\chi^{\pm}}
\newcommand{\sln}{\chi_{\ell}^{\pm}}
\newcommand{\ul}[1]{\underline{#1}}
\newcommand{\pa}{\partial}
\newcommand{\f}{\phi}
\newcommand{\Ga}{Z}
\newcommand{\cB}{{\cal B}}
\newcommand{\cT}{{\cal T}}
\newcommand{\col}{{\rm col}}
\newcommand{\nato}{\mathbb{N}_0}
\newcommand{\mz}{\mathbb{Z}}
\newcommand{\pl}{\mathbbm{1}}
\numberwithin{equation}{section}
\begin{document}
\title{\bf Degree choosable signed graphs }

\author{{{
Thomas Schweser}\thanks{
Technische Universit\"at Ilmenau, Inst. of Math., PF 100565, D-98684 Ilmenau, Germany. E-mail
address: thomas.schweser@tu-ilmenau.de}}
\and{{Michael Stiebitz}
\thanks{
Technische Universit\"at Ilmenau, Inst. of Math., PF 100565, D-98684 Ilmenau, Germany. E-mail
address: michael.stiebitz@tu-ilmenau.de}}
}
\date{}
\maketitle

\begin{abstract}
A signed graph is a graph in which each edge is labeled with $+1$ or $-1$. A (proper) vertex coloring of a signed graph is a mapping $\f$ that assigns to each vertex $v\in V(G)$ a color $\f(v)\in \mz$ such that every edge $vw$ of $G$ satisfies $\f(v)\not= \sg(vw)\f(w)$, where $\sg(vw)$ is the sign of the edge $vw$. For an integer $h\geq 0$, let $\Ga_{2h}=\{\pm1,\pm2, \ldots, \pm h\}$ and $\Ga_{2h+1}=\Ga_{2h} \cup \{0\}$. Following \cite{MaRS2015}, the signed chromatic number $\scn(G)$ of $G$ is the least integer $k$ such that $G$ admits a vertex coloring $\f$ with ${\rm im}(\f)\subseteq \Ga_k$. As proved in \cite{MaRS2015}, every signed graph $G$ satisfies $\scn(G)\leq \De(G)+1$ and there are three types of signed connected simple graphs for which equality holds. We will extend this Brooks' type result by considering graphs having multiple edges. We will also proof a list version of this result by characterizing degree choosable signed graphs. Furthermore, we will establish some basic facts about color critical signed graphs.

\end{abstract}

\noindent{\small{\bf AMS Subject Classification:} 05C15}

\noindent{\small{\bf Keywords:} Signed Graphs, Graph coloring, List coloring.}

\section{Introduction}
This paper deals with the vertex coloring problem for signed graphs introduced by Zaslavsky \cite{Zaslavsky82a,Zaslavsky82b,Zaslavsky84} in the 1980s. Recently M\'a\v{c}ajov\'a, Raspaud and \v{S}koviera \cite{MaRS2015}
proved an extension of Brooks' theorem to signed simple graphs. Our aim is to characterize signed graphs that are degree choosable and to establish some basic properties of critical signed graphs.

\bigskip

\centerline{\bf Signed graphs}

\medskip

Signed graphs were first defined and investigated by Harary \cite{Harary53}.
Throughout this paper, the term graph refers to a finite graph which may have multiple edges but no loops. A {\em signed graph} is a graph in which the edges are labeled by $+1$ or $-1$. So a singed graph is a triple $G=(V,E,\sg)$, where $V=V(G)$ is the vertex set of $G$, $E=E(G)$ is the edge set of $G$, and $\sg=\sg_G$ is the sign mapping of $G$, i.e. $\sg:V(G) \to \{-1,1\}$. In order to make a clear distinction between a signed graph and its underlying graph, we shall use $\ul{G}$ to denote the underlying graph of a signed graph $G$.

For a signed graph $G$ we adopt the standard notations for graphs.
For $X, Y \subseteq V(G)$, let $E_G(X,Y)$ be the set of all edges joining a vertex of $X$ with a vertex of $Y$. Let $E_G[X]=E_G(X,X)$ be the set of all edges with both ends in $X$, and let $\pa_GX$ the set of all edges with exactly one end in $X$.
If the meaning is clear we will frequently omit subscripts and brackets for the sake of readability. Thus, the degree of a vertex $v$ in $G$ is $d_G(v)=|\pa_G v|$, and the multiplicity of two distinct vertices $v, w$ in $G$ is $\mu_G(v,w)=|E_G(v,w)|$. As usual, we denote by $\De(G)$, $\de(G)$ and $\mu(G)$ the maximum degree, the minimum degree and the maximum multiplicity of $G$, respectively. Thus, a simple signed graph is a signed graph $G$ with $\mu(G)\leq 1$.

\bigskip

\centerline{\bf Switching and balance}

\medskip

An edge of a signed graph $G$ is said to be {\em positive} or {\em negative} depending on whether its sign is +1 or -1. A {\em positive signed graph} is a signed graph consisting only of positive edges; and a {\em negative signed graph} is a signed graph consisting only of negative edges. A signed graph is positive if and only if $\sg=\pl$, i.e., $\sg(e)=1$ for all $e\in E(G)$.

If $X$ is a vertex set of a signed graph $G$, then a new signed graph $G'$ can be obtained by reversing the sign of each edge belonging to the coboundary $\pa_G X$. Then $\ul{G'}=\ul{G}$, $\sg'(e)=-\sg(e)$ for $e\in \pa_GX$ and $\sg(e)=\sg'(e)$ otherwise. We then say that $G'$ is obtained from $G$ by {\em switching} at $X$ and write $G'=G/X$. Let $X$ and $Y$ be subsets of $V(G)$. Furthermore, let $\overline{X}=V(G)\sm X$ and $X + Y=(X\cup Y)\sm (X \cap Y)$. Then $G/X=G/\overline{X}$, $G/\ems=G$ and $G/X/Y=G/(X + Y)$. Two signed graphs $G$ and $G'$ are {\em switching equivalent}, written $G\equiv G'$, if there is a vertex set $X\subseteq V(G)$ such that $G'=G/X$. This obviously defines an equivalence relation for the class of signed graphs.

If $G$ is a signed graph and $H$ is a signed subgraph of $G$, that is, $\ul{H}$ is a subgraph of $\ul{G}$ and $\sg_H=\sg_G|_{E(H)}$, then $$\sg_G(H)=\prod_{e\in E(H)}\sg_G(e)$$
is called the {\em sign product} of $H$. A signed graph $G$ is called {\em balanced} if the sign product of each cycle of $G$ is positive, otherwise it is called {\em unbalanced}. Clearly, if a signed graph $G$ is balanced, then any signed graph switching equivalent to $G$ is balanced, too. The following characterization of balanced graphs was obtained by Harary \cite{Harary53}.

\begin{theorem}
\label{Theorem:Harary:balanced}
For a signed graph the following statements are equivalent:
\begin{itemize}
\item[{\rm (a)}] $G$ is balanced.
\item[{\rm (b)}] The vertex set of $G$ is the disjoint union of two sets $X$ and $Y$ such that an edge of $G$ is negative if and only if this edge belongs to $E_G(X,Y)$.
\item[{\rm (c)}] $G$ is switching equivalent to a positive signed graph.
\end{itemize}
\end{theorem}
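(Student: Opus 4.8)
The plan is to establish the cycle of implications (b) $\Rightarrow$ (c) $\Rightarrow$ (a) $\Rightarrow$ (b), of which the first two are short. For (c) $\Rightarrow$ (a) I would simply invoke the remark made just before the theorem: switching preserves balance (every cycle meets a coboundary $\pa_G X$ in an even number of edges, so its sign product is unchanged), and a positive signed graph is trivially balanced since each cycle has sign product $+1$; hence any $G$ switching equivalent to a positive signed graph is itself balanced. For (b) $\Rightarrow$ (c), given a partition $V(G)=X\cup Y$ with the stated property, I would switch at $X$. Since $Y=\overline{X}$, the coboundary $\pa_G X$ equals $E_G(X,Y)$, which by hypothesis is exactly the set of negative edges; switching reverses precisely these signs, so every edge of $G/X$ is positive, and thus $G\equiv G/X$ is positive.

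The substance lies in (a) $\Rightarrow$ (b). I would first reduce to the connected case by treating each component separately and taking the union of the resulting vertex sets, so assume $G$ is connected and fix a spanning tree $T$ of $\ul{G}$ rooted at some vertex $r$. Define $f\colon V(G)\to\{-1,+1\}$ by $f(r)=+1$ and, proceeding outward along $T$, $f(w)=\sg(vw)f(v)$ for each tree edge $vw$; this is well defined because $T$ is a tree, and it has no dependence on the order of processing. I would then set $X=\{v\mid f(v)=+1\}$ and $Y=\{v\mid f(v)=-1\}$.

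The crux is to verify that $\sg(vw)=f(v)f(w)$ holds for \emph{every} edge $vw$ of $G$, not merely the tree edges. For a tree edge this is immediate from the defining recursion together with the fact that $\pm1$ equals its own inverse. For a non-tree edge $vw$, the edge together with the unique $v$--$w$ path $P$ in $T$ forms a cycle $C$, and balance gives $\sg_G(C)=+1$; a telescoping product along $P$ yields $\prod_{e\in E(P)}\sg(e)=f(v)f(w)$, so multiplying by $\sg(vw)$ forces $\sg(vw)=f(v)f(w)$. It follows that an edge $vw$ is negative precisely when $f(v)\neq f(w)$, i.e.\ precisely when $vw\in E_G(X,Y)$, which is statement (b). I expect the main obstacle to be exactly this non-tree verification via fundamental cycles; the only additional point requiring care is the presence of multiple edges, but a pair of parallel edges forms a $2$-cycle whose sign product must be $+1$ by balance, forcing the two edges to share a sign, so the identity $\sg(vw)=f(v)f(w)$ and the telescoping argument remain consistent throughout.
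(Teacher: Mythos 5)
The paper does not prove this statement at all: it is quoted as Harary's classical characterization of balanced signed graphs, with a citation to Harary (1953) in place of a proof. So there is no in-paper argument to compare yours against, and your proposal has to stand on its own --- which it does. The implications (b) $\Rightarrow$ (c) and (c) $\Rightarrow$ (a) are handled correctly (a cycle meets any coboundary $\pa_G X$ in an even number of edges, so switching preserves all cycle sign products), and your proof of (a) $\Rightarrow$ (b) is the standard spanning-tree ``potential function'' argument: define $f$ along a spanning tree, then use balance of the fundamental cycles to extend the identity $\sg(vw)=f(v)f(w)$ to the non-tree edges. You correctly identified the one point that needs care in this paper's setting, namely that $G$ may have parallel edges: since the paper's graphs are loopless multigraphs, a pair of parallel edges is a $2$-cycle, so balance forces parallel edges to carry the same sign, which makes the notation $\sg(vw)$ and the telescoping computation unambiguous. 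The reduction to the connected case is also fine, as no edge joins distinct components. I see no gap; this is a complete and correct proof of a result the authors chose to import rather than prove.
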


If a signed graph $G$ satisfies statement (b) of the above theorem, we say that $G$ is a {\em balanced graph with parts} $X$ and $Y$.

A signed graph $G$ is {\em antibalanced} if the sign product of every even cycle of $G$ is positive and the sign product of every odd cycle of $G$ is negative.
The {\em negation} of a signed graph $G$ is the signed graph obtained from $G$ by reversing the sign of all edges of $G$. Obviously, a signed graph is antibalanced if and only if its negation is balanced. So Harrary's characterization of balanced graphs implies the following result.

\begin{theorem}
\label{Theorem:Harary:antibalanced}
For a signed graph the following statements are equivalent:
\begin{itemize}
\item[{\rm (a)}] $G$ is antibalanced.
\item[{\rm (b)}] The vertex set of $G$ is the disjoint union of two sets $X$ and $Y$ such that an edge of $G$ is positive if and only if this edge belongs to $E_G(X,Y)$.
\item[{\rm (c)}] $G$ is switching equivalent to a negative signed graph.
\end{itemize}
\end{theorem}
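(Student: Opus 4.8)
The plan is to derive this theorem as a direct corollary of Theorem~\ref{Theorem:Harary:balanced} by passing to the negation. Write $-G$ for the negation of $G$. Since negation leaves the underlying graph unchanged and merely reverses every sign, the observation already recorded above gives that $G$ is antibalanced if and only if $-G$ is balanced; this turns statement~(a) for $G$ into statement~(a) of Theorem~\ref{Theorem:Harary:balanced} applied to $-G$. It therefore suffices to translate statements~(b) and~(c) of that theorem, written for $-G$, back into the language of $G$.

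For (b), I first note that $E_{-G}(X,Y)=E_G(X,Y)$ for all $X,Y\subseteq V(G)=V(-G)$, because the two signed graphs share the same underlying graph. An edge $e$ is positive in $G$ exactly when $\sg_G(e)=+1$, equivalently when $\sg_{-G}(e)=-1$, i.e.\ exactly when $e$ is negative in $-G$. Hence the condition ``an edge of $-G$ is negative if and only if it lies in $E_{-G}(X,Y)$'' is word-for-word the condition ``an edge of $G$ is positive if and only if it lies in $E_G(X,Y)$''. This matches statement~(b) for $G$ with statement~(b) of Theorem~\ref{Theorem:Harary:balanced} for $-G$.

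For (c), the key point to verify is that negation commutes with switching: for every $X\subseteq V(G)$ one has $-(G/X)=(-G)/X$, since switching at $X$ flips the signs of exactly the edges in $\pa_GX$ irrespective of their current values, and negation flips all signs, so the two operations can be performed in either order. Consequently $G\equiv H$ if and only if $-G\equiv -H$, and $-H$ is negative precisely when $H$ is positive. Thus ``$-G$ is switching equivalent to a positive signed graph'' is equivalent to ``$G$ is switching equivalent to a negative signed graph'', which is statement~(c) for $G$. Combining the three translations with Theorem~\ref{Theorem:Harary:balanced} applied to $-G$ yields the equivalence of (a), (b) and (c).

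The argument is essentially bookkeeping, so I expect no serious obstacle; the only points that deserve care are the commutation of negation and switching used for (c) and the sign-swap in (b), both of which rest on the fact that negation preserves the underlying graph.
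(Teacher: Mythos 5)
Your proposal is correct and follows exactly the route the paper itself indicates: the paper derives this theorem from Theorem~\ref{Theorem:Harary:balanced} via the observation that $G$ is antibalanced if and only if its negation is balanced, which is precisely your argument (you merely spell out the bookkeeping for (b) and the commutation of negation with switching for (c), both of which are sound).
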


\centerline{\bf Signed chromatic number}

\medskip

Let $G$ be a signed graph. A {\em coloring} of $G$ is a mapping $\f:V(G) \to \mz$ such that every edge $e\in E_G(v,w)$ satisfies $\f(v)\not=\sg(e) \f(w)$. It is notable that if two vertices $v$ and $w$ of $G$ are joined by a pair of differently signed parallel edges, then $|\f(v)|\not=|\f(w)|$. The above definition is due to Zalavsky \cite{Zaslavsky82a} and was mainly motivated by the following two simple observations. A coloring of a positive signed graph is an ordinary vertex coloring of its underlying graph. Furthermore, if $\f$ is a coloring of a signed graph $G$ and $G'=G/X$ for a vertex set $X$ of $G$, then the mapping $\f'$, satisfying $\f'(v)=-\f(v)$ if $v\in X$ and $\f'(v)=\f(v)$ otherwise, is a coloring of $G'$. We denote the coloring $\f'$ by $\f/X$.

A subset $C$ of $\mz$ is also called a {\em color set}. Then $-C=\set{-c}{c\in C}$ and the color set $C$ is called {\em symmetric} if $C=-C$. For an integer $h\geq 0$, let $\Ga_{2h}=\{\pm1,\pm2, \ldots, \pm h\}$ and $\Ga_{2h+1}=\Ga_{2h} \cup \{0\}$. The {\em signed chromatic number} $\scn(G)$ of $G$ is the least integer $k$ such that $G$ admits a coloring $\f$ with ${\rm im}(\f)\subseteq \Ga_k$.

The above  definition of the signed chromatic number is due to M\'a\v{c}ajov\'a, Raspaud and \v{S}koviera \cite{MaRS2015}. They also established some basic facts about the signed chromatic number. By the two observations about colorings of signed graphs it follows that switching equivalent signed graphs have the same signed chromatic number and the signed chromatic number
of a balanced signed graph coincides with the chromatic number $\chi$ of its underlying graph. As proved in \cite{MaRS2015}, if $G$ is a signed graph, then
\begin{equation}
\label{Equation:chi+<chi}
\scn(G)\leq 2\chi(\ul{G})-1
\end{equation}
and there are signed simple graphs for which equality hold. We want to introduce a class
of signed graphs for which equality holds in (\ref{Equation:chi+<chi}). If $H$ is a simple graph, then we denote by $G=2H$ the signed graph obtained from $H$ by replacing every edge of $H$ by a pair of differently signed parallel edges. Then we have the following result.

\begin{theorem}
\label{Theorem:G=2H}
If $G=2H$ for a simple graph $H$, then $\scn(G)=2\chi(H)-1$.
\end{theorem}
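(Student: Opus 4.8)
The plan is to show that a coloring of $2H$ is essentially the same object as an ordinary proper coloring of $H$, with the colors of $H$ playing the role of the \emph{absolute values} of the colors used on $2H$. First I would unwind the definition: in $2H$ every edge $vw$ of $H$ is replaced by a positive and a negative edge, so a coloring $\f$ of $2H$ must satisfy both $\f(v)\neq \f(w)$ and $\f(v)\neq -\f(w)$, i.e. $|\f(v)|\neq |\f(w)|$ for every edge $vw$ of $H$ (this is exactly the remark made after the definition of a coloring). Consequently the map $v\mapsto |\f(v)|$ is a proper vertex coloring of the underlying simple graph $H$, and conversely any proper coloring of $H$ lifts to a coloring of $2H$ once we choose, for each color class, a color of $\Ga_k$ of the prescribed absolute value.

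For the lower bound I would count how many distinct absolute values the palette $\Ga_k$ offers. For even $k=2h$ these are $\{1,\dots,h\}$ and for odd $k=2h+1$ they are $\{0,1,\dots,h\}$, so in either case $\Ga_k$ contains exactly $\lceil k/2\rceil$ distinct absolute values. Hence a coloring $\f$ of $2H$ with ${\rm im}(\f)\subseteq \Ga_k$ produces a proper coloring of $H$ using at most $\lceil k/2\rceil$ colors, which forces $\chi(H)\leq \lceil k/2\rceil$. Applying this to $k=\scn(2H)$ and solving $\lceil k/2\rceil \geq \chi(H)$ for the smallest admissible integer $k$ yields $\scn(2H)\geq 2\chi(H)-1$.

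For the matching upper bound I would either invoke (\ref{Equation:chi+<chi}), noting that the underlying (multi)graph $\ul{2H}$ has the same chromatic number as $H$ and therefore $\scn(2H)\leq 2\chi(\ul{2H})-1=2\chi(H)-1$, or give the construction directly: take an optimal proper coloring $V_1,\dots,V_{c}$ of $H$ with $c=\chi(H)$ and color every vertex of $V_i$ with the integer $i-1\in\{0,1,\dots,c-1\}\subseteq \Ga_{2c-1}$. Since adjacent vertices of $H$ lie in different classes they receive colors of different absolute value, so this is a valid coloring of $2H$ with image in $\Ga_{2c-1}$, giving $\scn(2H)\leq 2c-1$. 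Combining the two bounds gives the claimed equality.

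The argument is short and the only place demanding care is the parity bookkeeping: correctly counting the $\lceil k/2\rceil$ absolute values available in $\Ga_k$ (in particular remembering that the color $0$ is present exactly when $k$ is odd) and translating $\lceil k/2\rceil\geq \chi(H)$ into $k\geq 2\chi(H)-1$. This is where an off-by-one slip would be easiest to make, so I would isolate the absolute-value count as an explicit intermediate claim before assembling the two inequalities.
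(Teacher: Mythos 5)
Your proof is correct and follows essentially the same route as the paper: the upper bound via the coloring with values $0,1,\dots,\chi(H)-1$, and the lower bound by converting a signed coloring of $2H$ into an ordinary coloring of $H$ and counting the available values in $\Ga_k$. The only cosmetic difference is that you pass to $|\f(v)|$ directly, while the paper achieves the same normalization by switching at the set of negatively colored vertices (using $G/X=G$ for $G=2H$); the counting of $\lceil k/2\rceil$ absolute values is the same bookkeeping in both versions.
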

\begin{proof}
Let $k=\chi(H)$ and $h=\scn(G)$. Then there is an ordinary vertex coloring $\f$ of $H$ using colors $0,1, \ldots, k-1$. Obviously, $\f$ is a coloring of the signed graph $G$ with ${\rm im}(\f) \subseteq \Ga_{2k-1}$. Hence $\scn(G)\leq 2\chi(H)-1$. Since $h=\scn(G)$, there exists a coloring $\f$ of $G$ with ${\rm im}(\f)\subseteq \Ga_h$. If $X=\set{v\in V(G)}{\f(v)<0}$, then $G/X=G$ and so $\f'=\f/X$ is a coloring of $G$ with ${\rm im}(\f')\subseteq \Ga_h$ and $\f'(v)\geq 0$ for all $v\in V(G)$. Then $\f'$ is an ordinary vertex coloring of $G$ and we obtain that $k\leq \frac{h+1}{2}$, i.e., $\scn(G)\geq 2\chi(G)-1$.
\end{proof}

As an immediate consequence of Theorem~\ref{Theorem:Harary:antibalanced} we obtain the following characterization of signed graphs with $\scn\leq 2$.

\begin{theorem}
\label{Theorem:Bipartite}
A signed graph $G$ satisfies $\scn(G)\leq 2$ if and only if $G$ is antibalanced.
\end{theorem}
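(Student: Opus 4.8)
The plan is to translate a coloring with colors in $\Ga_2=\{-1,+1\}$ directly into the structural condition (b) of Theorem~\ref{Theorem:Harary:antibalanced}, so that the claimed equivalence becomes essentially a restatement of Harary's characterization of antibalanced graphs. First I would record a small reduction: $\scn(G)\leq 2$ is equivalent to the existence of a coloring $\f$ of $G$ with $\mathrm{im}(\f)\subseteq\Ga_2$. This needs a remark because $\Ga_1=\{0\}$ is not contained in $\Ga_2$; but if $\scn(G)\leq 1$, then an all-zero coloring is proper, which forces $E(G)=\ems$, and recoloring every vertex with $+1$ then yields a coloring with image in $\Ga_2$. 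Hence it suffices to characterize the signed graphs admitting a $\{-1,+1\}$-coloring.

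Next I would analyze the coloring condition $\f(v)\neq\sg(e)\f(w)$ under the restriction that colors lie in $\{-1,+1\}$. For a positive edge $e\in E_G(v,w)$ the condition reads $\f(v)\neq\f(w)$, which forces $v$ and $w$ to receive opposite colors; for a negative edge it reads $\f(v)\neq-\f(w)$, which, since both values lie in $\{-1,+1\}$, forces $\f(v)=\f(w)$. Thus, setting $X=\set{v\in V(G)}{\f(v)=+1}$ and $Y=\set{v\in V(G)}{\f(v)=-1}$, a mapping $\f\colon V(G)\to\{-1,+1\}$ is a coloring of $G$ if and only if the positive edges of $G$ are exactly the edges of $E_G(X,Y)$. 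This is precisely statement (b) of Theorem~\ref{Theorem:Harary:antibalanced} for the partition of $V(G)$ into $X$ and $Y$.

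Finally I would assemble the two directions. If $\scn(G)\leq 2$, I take a $\{-1,+1\}$-coloring and observe that the induced partition into $X$ and $Y$ satisfies (b), so $G$ is antibalanced. Conversely, if $G$ is antibalanced, Theorem~\ref{Theorem:Harary:antibalanced} supplies a partition of $V(G)$ into $X$ and $Y$ in which the positive edges are exactly those of $E_G(X,Y)$; assigning $+1$ to the vertices of $X$ and $-1$ to those of $Y$ then gives a coloring with image in $\Ga_2$, whence $\scn(G)\leq 2$. No step here is genuinely hard; the only point demanding care is the sign bookkeeping—positive edges must correspond to opposite colors and negative edges to equal colors—which has to line up with the positive/$E_G(X,Y)$ clause of the \emph{antibalanced} characterization rather than with the balanced analogue in Theorem~\ref{Theorem:Harary:balanced}.
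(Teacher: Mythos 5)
Your proof is correct and follows exactly the route the paper intends: the paper states this theorem as an ``immediate consequence'' of Theorem~\ref{Theorem:Harary:antibalanced} without writing out the details, and your argument---identifying a $\{-1,+1\}$-coloring with a partition in which the positive edges are precisely the crossing edges---is that intended derivation, with the added (and worthwhile) care of handling the $\Ga_1\not\subseteq\Ga_2$ technicality and getting the sign bookkeeping right.
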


The {\em coloring number} $\col(G)$ of a (signed) graph $G$ is the maximum minimum degree of the subgraphs of $G$ plus 1. A graph with coloring number at most $k+1$ is also called {\em $k$-degenerate}. It is well known and easy to prove that a graph is $k$-degenerate if and only if there is an ordering $v_1,v_2, \ldots , v_n$ of its vertices such that for every $i\in \{1,2, \ldots, n\}$ the vertex $v_i$ has degree at most $k$ in the subgraph induced by the vertex set $\{v_1,v_2, \ldots, v_i\}$. Using the classical sequential coloring procedure, it is easy to see that every signed graph $G$ satisfies
$$\scn(G)\leq \col(G).$$
That this bound holds was observed in \cite{MaRS2015}. As a consequence, every signed graph $G$ satisfies $\scn(G)\leq \De(G)+1$. The following theorem due to M\'a\v{c}ajov\'a, Raspaud and \v{S}koviera \cite{MaRS2015} generalizes the famous theorem of Brooks \cite{Brooks41}.

\begin{theorem}
\label{Theorem:Brooks}
Let $G$ be a signed graph, whose underlying graph is simple and connected. If $G$ is not a balanced complete graph, a balanced odd cycle, or a unbalanced even cycle, then $\scn(G)\leq \De(G)$.
\end{theorem}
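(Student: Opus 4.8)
The plan is to mirror the classical proof of Brooks' theorem, but adapted to the signed setting via switching and the parity phenomena peculiar to signs. Suppose for contradiction that $G$ is a connected signed graph with simple underlying graph $\ul{G}$, that $G$ is none of the three exceptional types, yet $\scn(G)=\De(G)+1$. Set $\De=\De(G)$. First I would dispose of the easy cases: if $\ul{G}$ is not regular, then a classical greedy/ordering argument works. Choose a vertex $u$ of degree $<\De$, grow a spanning tree of $\ul{G}$ rooted at $u$, and order the vertices $v_1,\dots,v_n$ so that $v_n=u$ and every $v_i$ with $i<n$ has a neighbor later in the ordering. Then each $v_i$ with $i<n$ has at most $\De-1$ already-colored neighbors when we color in reverse, and $u$ itself has degree $<\De$; since every vertex presents fewer than $\De$ constraints, the sequential coloring uses colors from $\Ga_{\De}$, contradicting $\scn(G)=\De+1$. (The signed constraint $\f(v)\neq\sg(e)\f(w)$ still forbids at most one color per edge, so the counting is unchanged.)

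The substantive case is when $\ul{G}$ is $\De$-regular. Here I would first handle small degree separately: if $\De\leq 2$, then $\ul{G}$ is a cycle, and a direct analysis using Theorem~\ref{Theorem:Bipartite} identifies exactly the balanced odd cycle and unbalanced even cycle as the graphs with $\scn=3=\De+1$, matching two of the three exceptions. For $\De\geq 3$ and $\ul{G}$ $\De$-regular, the heart of the argument is the standard Brooks reduction: if $\ul{G}$ is not complete, then (by the well-known structural lemma) there exist three vertices $a,b,c$ with $ab,bc\in E(\ul{G})$ but $ac\notin E(\ul{G})$, such that $\ul{G}-\{a,c\}$ remains connected. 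I would order the vertices so that $a,c$ come first, the remaining vertices are ordered by decreasing distance from $b$ in $\ul{G}-\{a,c\}$, and $b$ comes last. Coloring greedily, $a$ and $c$ may receive the \emph{same} color when read through the sign mapping (after an appropriate switching making the relevant edges positive), so that $b$ sees at most $\De-1$ distinct forbidden colors and can be colored from $\Ga_{\De}$.

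The delicacy — and the main obstacle — lies in translating "$a$ and $c$ receive the same color" into the signed language. In the unsigned case one simply assigns $a,c$ the color $1$; in the signed case the forbidden-color relation depends on edge signs, and one must exploit switching (Theorem~\ref{Theorem:Harary:balanced} and the switching-invariance of $\scn$) to arrange that coloring $a$ and $c$ identically genuinely saves a color at $b$. I expect the cleanest route is to first switch $G$ so that all edges of the chosen spanning structure are positive, reducing the greedy step to the unsigned counting while tracking the one subtle point: whether the pair $a,c$ can always be colored compatibly. This is where the three exceptions must enter. When $\ul{G}$ is the complete graph $K_{\De+1}$, the reduction is unavailable, and a separate computation is needed: I would show that a balanced $K_{\De+1}$ forces $\scn=\De+1$ (it behaves like the unsigned complete graph, by Theorem~\ref{Theorem:Harary:balanced}), whereas any \emph{unbalanced} complete graph admits a negative triangle that, after switching, frees up a color and yields $\scn\leq\De$. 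Verifying that balance is exactly the obstruction for complete graphs, and that no other regular graph forces equality once $\De\geq 3$, is the technical core; the cycle cases and the non-regular case are comparatively routine.
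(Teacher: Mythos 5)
Your outline follows the classical Lov\'asz-style proof of Brooks' theorem rather than the route the paper takes: the paper quotes Theorem~\ref{Theorem:Brooks} from \cite{MaRS2015} and instead derives a stronger list/multigraph version (Corollary~\ref{Corollary:Brooks2}) from the characterization of uncolorable pairs in Theorem~\ref{Theorem:unfarbbaresPaar}. Within your outline, the parts you defer as the ``technical core'' are actually the easy parts and need no switching at all: in the $\De$-regular, non-complete, $2$-connected case with $\De\geq 3$ one sets $\f(a)=\sg(ab)\,t$ and $\f(c)=\sg(cb)\,t$ for a fixed $t\in\Ga_{\De}\sm\{0\}$, so that $a$ and $c$ both forbid the single color $t$ at $b$ and there is no constraint between them since $ac\notin E(\ul{G})$; and for an unbalanced complete graph one picks an \emph{unbalanced} triangle $b,x,y$ (not merely a negative one), pre-colors $\f(x)=\sg(bx)\,t$ and $\f(y)=\sg(by)\,t$ -- the unbalance $\sg(bx)\sg(xy)\sg(by)=-1$ is exactly the condition $\f(x)\neq\sg(xy)\f(y)$ -- and colors greedily ending at $b$.

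The genuine gap is the case in which $\ul{G}$ is $\De$-regular with $\De\geq 3$ but has a cut vertex: the configuration $a,b,c$ with $\ul{G}-\{a,c\}$ connected is only guaranteed for $2$-connected graphs, and your sketch never reduces to that case. In the unsigned proof one splits $G$ at a cut vertex $v$ into pieces $G_1,G_2$, colors each greedily, and permutes the colors of one piece so that the two colorings agree at $v$. In the signed setting with palette $\Ga_{\De}$ the only recolorings available are compositions with bijections $\pi$ of $\Ga_{\De}$ satisfying $\pi(-x)=-\pi(x)$ (switching only negates colors), and every such $\pi$ fixes the color $0$. Hence for odd $\De$ it can a priori happen that every $\Ga_{\De}$-coloring of $G_1$ assigns $0$ to $v$ while every $\Ga_{\De}$-coloring of $G_2$ assigns $v$ a nonzero color, and no permutation reconciles them. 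Excluding this situation is precisely what the block analysis in Theorem~\ref{Theorem:unfarbbaresPaar} accomplishes (every block of a non-colorable configuration is a brick, and a connected $\De$-regular simple signed graph all of whose blocks are bricks must consist of a single block, hence be one of the three exceptions); without that lemma or a substitute argument for the cut-vertex case, your induction does not close.
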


The aim of this paper is to extend this theorem to arbitrary signed graphs and to prove a list version as well as a degree version of this fundamental result. To accomplish this, we have to take into account two more types of forbidden signed graphs.

A signed graph $G$ is called a {\em brick} if $G$ is a balanced complete graph, a balanced odd cycle, an unbalanced even cycle, a $2K_n$ for an integer $n\geq 2$, or a $2C_n$ for an odd integer $n\geq 3$.

\bigskip

\centerline{\bf Signed list chromatic number}

\medskip

Let $G$ be a signed graph, let
$f:V(G) \to \nato$ be a function, and let
$k\geq 0$ be an integer. A {\em list-assignment}
$L$ of $G$ is a mapping that assigns to every vertex $v$ of $G$ a
set (list) $L(v)$ of colors, i.e., $L(v)\subseteq \mz$.
We say that $L$ is an {\em $f$-assignment}
if $|L(v)|=f(v)$ for all $v\in V$, and a
{\em $k$-assignment} if $|L(v)|=k$ for all $v\in V$, respectively. An {\em $L$-coloring}
of $G$ is a coloring $\f$ of $G$
such that $\f(v)\in L(v)$ for
all $v\in V$. If $G$ admits an $L$-coloring, then $G$ is said to be
{\em $L$-colorable} or {\em list-colorable} if it is clear to
which list-assignment we refer.
A signed graph $G$ is defined to be
{\em $f$-list-colorable} if $G$ is $L$-colorable
for every $f$-assignment $L$ of $G$. When $f(v)=k$ for all $v\in
V$, the corresponding term becomes {\em $k$-list-colorable} or {\em $k$-choosable}.
The {\em signed list-chromatic number} or {\em signed choice number} of $G$, denoted by
$\sln(G)$, is the least number $k\geq 0$ for which $G$ is
$k$-list-colorable.

It is notable that the signed list-chromatic number of a positive signed graph coincides with the list-chromatic number of its underlying graph. Furthermore, it is easy to show that switching equivalent signed graphs have the same list-chromatic number.

A coloring of a signed graph with color set $\Ga_k$ may be regarded as a
list-coloring for the constant list assignment $L$ with $L(v)=\Ga_k$ for all $v\in V(G)$. Thus every signed graph $G$ satisfies $\scn(G)\leq \sln(G)$. It follows from results by Erd\H{o}s, Rubin and Taylor \cite{ERT79} that there are positive signed bipartite graphs whose signed list-chromatic number is arbitrarily large. Based on the usual sequential coloring argument, it is easy to show that the following result holds.

\begin{proposition}
\label{Proposition:Basic}
Every signed graph $G$ is $f$-list-colorable, where $f(v)=d_G(v)+1$ for all $v\in V(G)$. Furthermore, every signed graph $G$ satisfies
$$\scn(G)\leq \sln(G)\leq \col(G)\leq \De(G)+1.$$
\end{proposition}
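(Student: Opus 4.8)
The plan is to derive all four assertions from the standard sequential (greedy) coloring argument, the only extra care being the bookkeeping for signed edges. The crucial observation is that for an edge $e\in E_G(v,w)$ and any color $\f(w)$ already assigned to $w$, the defining constraint $\f(v)\neq \sg(e)\f(w)$ forbids \emph{exactly one} color for $v$. Hence, if $t$ edges join $v$ to vertices that have already been colored, then at most $t$ colors are forbidden for $v$. Because edges are counted with multiplicity, a pair of differently signed parallel edges between $v$ and $w$ is correctly tallied as two edges contributing the two forbidden colors $\f(w)$ and $-\f(w)$; this signed count is the one point in the whole argument that requires attention.

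For the first statement I would fix an arbitrary ordering $v_1,\ldots,v_n$ of $V(G)$ and an arbitrary $f$-assignment $L$ with $f(v)=d_G(v)+1$, and color the vertices in this order. When $v_i$ is reached, the forbidden colors stem only from the edges joining $v_i$ to $\{v_1,\ldots,v_{i-1}\}$, of which there are at most $d_G(v_i)$. Since $|L(v_i)|=d_G(v_i)+1$ strictly exceeds the number of forbidden colors, a permissible color always remains in $L(v_i)$. This produces an $L$-coloring, so $G$ is $f$-list-colorable.

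For the chain of inequalities, $\scn(G)\le \sln(G)$ holds because, for the constant list-assignment $L(v)=\Ga_k$ with $k=\sln(G)$ (a $k$-assignment, as $|\Ga_k|=k$), an $L$-coloring is precisely a coloring with image contained in $\Ga_k$. For $\sln(G)\le \col(G)$, I would set $k=\col(G)$ and use that $G$ is $(k-1)$-degenerate: there is an ordering in which every $v_i$ has at most $k-1$ edges to earlier vertices, and the same greedy procedure run against an arbitrary $k$-assignment leaves at least one of the $k$ colors available at each step. Finally $\col(G)\le \De(G)+1$ is immediate, since $\de(H)\le \De(H)\le \De(G)$ for every subgraph $H$ of $G$. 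The argument is routine throughout; once the signed counting noted above is verified, the two greedy colorings deliver all four claims.
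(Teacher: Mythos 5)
Your proposal is correct and follows exactly the route the paper intends: the paper gives no explicit proof of Proposition~\ref{Proposition:Basic}, stating only that it follows from ``the usual sequential coloring argument,'' and your greedy argument with the signed bookkeeping (each edge $e\in E_G(v,w)$ forbidding exactly the single color $\sg(e)\f(w)$, counted with multiplicity) is precisely that argument, correctly extended to the degeneracy ordering for $\sln(G)\leq\col(G)$.
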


A signed graph is called {\em degree choosable} if $G$ is $f$-list-colorable
for the degree function $f=d_G$, that is, $f(v)=d_G(v)$ for all $v\in V(G)$. The next theorem, which characterizes degree choosable signed graphs, is one of the main results of this paper. The theorem is an extension of a similar characterization of degree choosable unsigned graphs due to Erd\H{o}s, Rubin and Taylor \cite{ERT79}.

\begin{theorem}
\label{Theorem:DegreeChoosable}
Let $G$ be a connected signed graph. Then $G$ is not degree choosable if and only if each block of $G$ is a brick.
\end{theorem}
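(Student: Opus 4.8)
The plan is to handle both directions within one inductive framework resting on three tools. The first is a \emph{surplus lemma}: if $G$ is connected, $|L(v)|\ge d_G(v)$ for every $v$, and $|L(v_0)|>d_G(v_0)$ for some $v_0$, then $G$ is $L$-colourable (order the vertices by a BFS from $v_0$, colour greedily so that each vertex has an as-yet-uncoloured neighbour, and colour $v_0$ last with a spare colour). The second is switching-invariance of list-colourability, which for a \emph{symmetric} list-assignment (every list fixed by $c\mapsto -c$) leaves the lists literally unchanged, so a balanced brick may be replaced by its positive representative and an antibalanced one by its negative representative. The third observation is that every brick is regular and $2$-connected, the only exception being the single-edge (balanced $K_2$) bricks.

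For the implication ``each block of $G$ is a brick $\Rightarrow$ $G$ is not degree choosable'' I would induct on $|V(G)|$, building a bad $d_G$-list-assignment. The engine is a \emph{Block Lemma}: given a brick $B$, a vertex $r$, and a symmetric colour set $T$ with $|T|=d_B(r)$ whose colours occur nowhere else, assign $T$ to \emph{every} vertex of $B\ssm\{r\}$; since $B$ is regular, $|T|=d_B(v)$ for all $v$, and the defining obstruction of the brick makes $\f(r)=c$ non-extendable for every $c\in T$ (a positive $K_m$ needs $m$ distinct colours, a bad-parity cycle fails under forced propagation with a symmetric $2$-list, a $2K_n$ needs $n$ distinct $\pm$-classes, a $2C_n$ needs three). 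In the induction I take a leaf block $B$ with cut vertex $r$, apply the hypothesis to $G'=G-(V(B)\ssm\{r\})$ to obtain a bad $L'$ with $|L'(r)|=d_{G'}(r)$, set $L(r)=L'(r)\cup T$ with $T$ fresh, symmetric, of size $d_B(r)$, and use the Block Lemma on $B$ to block precisely the colours of $T$. Any colouring with $\f(r)\in L'(r)$ already fails on $G'$, and any colouring with $\f(r)\in T$ fails on $B$, so $G$ is not $L$-colourable.

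For the converse I would prove the sharp contrapositive, a \emph{Master Lemma}: if $G$ is connected, $|L(v)|\ge d_G(v)$ for all $v$, and $G$ is not $L$-colourable, then the lists are tight and every block is a brick; the induction is on $|V(G)|+|E(G)|$. The surplus lemma forces tightness. If two parallel edges between $u$ and $w$ carry the same sign they impose one constraint, so deleting one creates a surplus and an $L$-colouring of the smaller graph colours $G$; hence no two parallel edges share a sign. If $G$ has a cut vertex $v$ with branches $G_1,\dots,G_k$ (each $G_i$ being $v$ together with one component of $G-v$), I colour the component $C_i=G_i-v$ (possible since a neighbour of $v$ carries surplus there), leaving $v$ at least $|L(v)|-d_{G_i}(v)$ admissible colours; thus the set $B_i$ of colours $c$ for which $\f(v)=c$ is non-extendable in $G_i$ satisfies $|B_i|\le d_{G_i}(v)$. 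Non-colourability of $G$ gives $\bigcup_i B_i=L(v)$, and as $\sum_i d_{G_i}(v)=d_G(v)=|L(v)|$ the $B_i$ must partition $L(v)$ with $|B_i|=d_{G_i}(v)$; restricting $L(v)$ to $B_i$ yields a tight bad assignment on the smaller $G_i$, so by induction every block of each $G_i$, hence of $G$, is a brick.

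The main obstacle is the remaining $2$-connected case of the Master Lemma: a $2$-connected signed graph with no two equally-signed parallel edges, tight lists, and no $L$-colouring must be shown to be a brick. When every adjacency is a $\pm$-digon, $G=2H_0$ with $H_0=\ul{G}$ simple and $2$-connected; the constraint is a proper colouring of $H_0$ by $\pm$-classes, each list of size $2d_{H_0}(v)$ offering at least $d_{H_0}(v)$ classes, so the unsigned Erd\H{o}s--Rubin--Taylor theorem colours $G$ unless $H_0$ is complete or an odd cycle, i.e.\ unless $G$ is $2K_n$ or a $2C_n$ with $n$ odd. A signed cycle reduces to a parity computation: with forced propagation it is $2$-choosable exactly when its number of positive edges is even, which fails precisely for the balanced odd and unbalanced even cycles. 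The genuinely delicate regime is that of a single edge present together with a vertex of degree $\ge 3$ (in particular the simple, non-cycle case), where I would adapt the ``save a colour'' move: find a vertex $v$ with two neighbours $u_1,u_2$ that can be assigned colours with $\sg(vu_1)\f(u_1)=\sg(vu_2)\f(u_2)$ — possible when $u_1,u_2$ are non-adjacent, or are joined only by a negative edge and so may be coloured equally — with $G-\{u_1,u_2\}$ connected, and then colour greedily towards $v$, which is left a free colour. The crux is to show that in this regime the only graph admitting no such move is the balanced complete graph (an unbalanced $K_n$, after switching a spanning star positive, always exposes a negative chord whose ends may be coloured equally), and to dispose of the subcase in which the two candidate vertices have disjoint lists.
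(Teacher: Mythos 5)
The decisive gap is in the $2$-connected case of your Master Lemma, which is exactly where the paper's Theorem~\ref{Theorem:unfarbbaresPaar} does its real work. For the regime ``a single edge present together with a vertex of degree $\ge 3$'' you propose the classical Brooks ``save a colour'' move, but you yourself leave its crux open, and in the list setting it genuinely fails as stated: the two neighbours $u_1,u_2$ that you want to colour compatibly (so that $\sg(vu_1)\f(u_1)=\sg(vu_2)\f(u_2)$) may have disjoint lists, or lists meeting only in $0$ across a negative edge, and nothing in your outline excludes this. This obstacle is precisely why both Erd\H{o}s--Rubin--Taylor and this paper argue differently: the paper first shows, by deleting a single non-separating vertex $v$ together with a colour $c\in L(v)$ and applying induction to the reduced pair $(G,L)/(v,c)$, that in a $2$-connected uncolorable pair \emph{every} edge $e\in E_G(v,w)$ satisfies $L(v)=\sg(e)L(w)$; hence all lists equal one set $C$ up to sign, $G$ is regular, and the lists are either constant symmetric or split as $C$/$-C$ along a balanced bipartition (Theorem~\ref{Theorem:unfarbbaresPaar}(c)). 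That structural fact simultaneously kills the disjoint-list subcase, supplies the regularity you would need, and drives the classification into the five bricks. Your case (iii) must moreover absorb all mixed digon/single-edge $2$-connected graphs, for which you have established neither regularity nor any list structure; as it stands this case does not close. (Your digon-only reduction of $2H_0$ to unsigned degree-choosability of $H_0$, your cycle parity computation, and your cut-vertex decomposition are all sound; the last coincides with the paper's Lemma~\ref{Lemma:BlockLists}.)

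A smaller but concrete error sits in the forward direction: your Block Lemma insists on a fresh \emph{symmetric} set $T$ with $|T|=d_B(r)$. A symmetric set of odd cardinality must contain $0$, and two such sets can never be disjoint, so the construction already breaks for a path on three vertices (two balanced $K_2$ blocks meeting at $r$) and, more generally, whenever two balanced complete blocks of even order must receive disjoint lists at a common cut vertex. The repair is the one encoded in the paper's Lemma~\ref{Lemma:ColorListsBricks}(a): for a balanced brick with parts $X,Y$ take an arbitrary (not necessarily symmetric) set $C$ of the right size and assign $C$ on $X$ and $-C$ on $Y$; symmetry is needed, and is available because the degree is then even, only for the unbalanced and digon bricks. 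With that change your forward induction is essentially the paper's combination of Lemma~\ref{Lemma:ColorListsBricks} with the gluing Lemma~\ref{Lemma:UnionList}.
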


Let $G$ be a (signed) graph. Recall that a {\em block} of $G$ is a maximal connected subgraph of $G$ that has no separating vertex. We denote by $\cB(G)$ the set of all blocks of $G$. If $G$ is a connected graph with no separating vertex, then $\cB(G)=\{G\}$. If $G$ is the union of two (signed) graphs $G_1$ and $G_2$ having only one vertex in common, then $\cB(G)=\cB(G_1) \cup \cB(G_2)$, provided that $|G_i|\geq 2$ for $i=1,2$. Any two distinct blocks of $G$ have at most one vertex in common; and a vertex of $G$ is a separating vertex of $G$ if and only if it belongs to more than one block of $G$. A block of $G$ which contains at most one separating vertex of $G$ is called an {\em end-block} of $G$. It is well known that if $G$ contains a separating vertex, then $G$ has at least two end-blocks.

\section{Characterizing uncolorable pairs}

In this section, we shall prove Theorem~\ref{Theorem:DegreeChoosable}. For this reason, we shall characterize the structure of so-called uncolorable pairs.

We call $(G,L)$ an {\em uncolorable pair} if $G$ is a signed connected  graph, $L$ is a list-assignment of $G$ satisfying $|L(v)|\geq d_G(v)$ for all $v\in V(G)$, and $G$ is not $L$-colorable. To characterize such uncolorable pairs, we shall use the following reduction. Let $(G,L)$ be an uncolorable pair. For a vertex $v$ of $G$, we denote by $N_G^+(v)$ the set of all vertices $w$ of $G$ such that $E_G(v,w)$ contains a positive edge. Similarly, we denote by $N_G^-(v)$
the set of all vertices $w$ of $G$ such that $E_G(v,w)$ contains a negative edge. Now, suppose that $|G|\geq 2$. Let $v$ be a non-separating vertex of $G$ and let $c\in L(v)$ be a color. For the signed graph $G'=G-v$ let $L'$ be the list-assignment with
$$L'(u)=
\left\{ \begin{array}{ll}
L(v) \sm \{c,-c\} & \mbox{\rm if } u\in N_G^+(v) \cap N_G^-(v),\\
L(v) \sm \{c\} & \mbox{\rm if } u\in N_G^+(v) \sm N_G^-(v)\\
L(v) \sm \{-c\} & \mbox{\rm if } u\in N_G^-(v) \sm N_G^+(v)\\
L(v) & \mbox{\rm otherwise.}
\end{array}
\right.$$
Then it is easy to check that $|L'(u)|\geq d_{G'}(u)$ for all $u\in V(G')$ and $G'$ is not $L'$-colorable. So $(G',L')$ is an uncolorable pair and we write $(G',L')=(G,L)/(v,c)$.

\begin{theorem}
\label{Theorem:unfarbbaresPaar}
Let $(G,L)$ be an uncolorable pair. Then the following statements hold:
\begin{itemize}
\item[{\rm (a)}] $|L(v)|=d_G(v)$ for all $v\in V(G)$.
\item[{\rm (b)}] Any two vertices are joined by exactly one edge or by a pair of differently signed parallel edges.
\item[{\rm (c)}] If $G$ is a block, then every edge $e\in E_G(v,w)$ satisfies $L(v)=\sg(e)L(w)$. Furthermore, the following statements holds:
\begin{itemize}
\item[{\rm (c1)}] If $\sg=\pl$, then there exists a color set $C$ such that $L(v)=C$ for all $v\in V(G)$.
\item[{\rm (c2)}] If $\sg\not= \pl$, then there exists a symmetric color set $C$ such that $L(v)=C$ for all $v\in V(G)$, or $G$ is balanced with parts $X, Y$ and there exists a color set $C$ such that $L(v)=C$ for all $v\in X$ and $L(v)=-C$ for all $v\in Y$.
\end{itemize}
As a consequence, $G$ is regular.
\item[{\rm (d)}] If $G$ is a block with $\mu(G)\geq 2$, then $G$ is $r$-regular for an even number $r\geq 2$.
\item[{\rm (e)}] Every block of $G$ is a brick.
\end{itemize}
\end{theorem}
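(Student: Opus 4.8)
The backbone of the whole argument is a single coloring lemma: if $H$ is a connected signed graph and $L$ a list-assignment with $|L(u)|\ge d_H(u)$ for all $u$ and $|L(u_0)|>d_H(u_0)$ for some $u_0$, then $H$ is $L$-colorable. I would prove it by greedily coloring along a spanning tree rooted at $u_0$, processing vertices in order of decreasing distance from $u_0$; the point is that a single edge $e\in E_H(u,w)$ with $w$ already colored forbids at most one color (namely $\sg(e)\f(w)$) at $u$, so a non-root vertex, which always has an uncolored parent, sees fewer than $d_H(u)$ forbidden colors, while the root has strictly more colors than its degree. Part (a) is then the contrapositive. For (b), whenever two vertices are joined by two edges of equal sign — which, by pigeonhole, happens as soon as the multiplicity is at least $3$, or when it is $2$ without opposite signs — I would delete one of these (redundant) edges; any coloring of the smaller graph colors $G$, but the smaller pair is connected with strict slack at the two endpoints, hence $L$-colorable by the lemma, a contradiction.

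Parts (c) and (d) exploit the reduction $(G,L)/(v,c)$, which is again an uncolorable pair. Assuming $G$ is a block, every vertex is non-separating, so the reduction is always available. For an edge $e\in E_G(v,w)$, if some $c\in L(v)$ had $\sg(e)c\notin L(w)$, reducing at $(v,c)$ would leave $|L'(w)|$ unchanged while $d_{G-v}(w)$ drops, creating strict slack at $w$ and hence a coloring — contradiction; this forces $\sg(e)L(v)\subseteq L(w)$ and, by symmetry, $L(v)=\sg(e)L(w)$. Propagating this identity through the connected $G$ yields (c): in the unbalanced case a negative cycle gives $L(v)=-L(v)$, a common symmetric set $C$; in the balanced case Harary's theorem (Theorem~\ref{Theorem:Harary:balanced}) supplies parts $X,Y$, and negating $L$ on $Y$ produces a function constant across every edge, so $L\equiv C$ on $X$ and $-C$ on $Y$. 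Regularity is immediate from (a). For (d), $\mu\ge2$ forces a differently signed pair $vw$ and hence a symmetric $C$; if $0\in C$ then reducing at $(v,0)$ removes only the single color $0$ from $L(w)$ (since $-0=0$) while $d(w)$ falls by $2$, again giving slack and a coloring, so $0\notin C$ and $r=|C|$ is even.

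Part (e) is the heart. I would first reduce to the case that $G$ is a block. Given an end-block $B$ with cut vertex $z$, the lemma applied to $B$ and to the rest of $G$, each rooted at $z$, shows that at least $d_B(z)$ colors of $z$ extend over the rest while at least $|L(z)|-d_B(z)$ extend over $B$; as $G$ is uncolorable these two color sets are disjoint, so $L(z)$ can be shrunk to make either $(B,\cdot)$ an uncolorable pair (with $B$ a block) or $(G-(V(B)\sm\{z\}),\cdot)$ an uncolorable pair (with fewer blocks), and the second possibility drives an induction on the number of blocks. Once $G$ itself is a block, (c) and (d) make it $r$-regular with lists constant up to sign. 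Let $\hat G$ be the simplification of $\ul G$. If $\hat G$ is neither complete nor a cycle, the classical lemma on $2$-connected graphs gives vertices $x,y,z$ with $xy,yz$ edges, $xz$ a non-edge, and $\hat G-x-z$ connected; using the symmetric (or two-part) list I can color $x$ and $z$ so that the colors they forbid at $y$ coincide, create strict slack at $y$, and color the connected graph $G-x-z$ by the lemma — contradicting uncolorability. Hence $\hat G$ is complete or a cycle.

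It then remains to decide which signed complete graphs and cycles are uncolorable. By (b) each adjacency is a single edge or a differently signed pair, and by (d) the presence of a pair forces $r$ even, which already kills the mixed-multiplicity cycles. When every edge is doubled, $G=2H$ and the symmetric list collapses to its $r/2$ absolute values, turning an $L$-coloring into an ordinary coloring of $H$ from a palette of size $d_H$; the unsigned theorem of Erd\H{o}s, Rubin and Taylor then forces $H$ to be complete or an odd cycle, i.e.\ $G=2K_n$ or $G=2C_n$ with $n$ odd. When $G$ is a simple cycle, a short sign-product computation shows uncolorability is equivalent to being a balanced odd cycle or an unbalanced even cycle, and for a balanced simple complete graph switching (Theorem~\ref{Theorem:Harary:balanced}) reduces matters to the fact that $K_n$ needs more than $r=n-1$ colors. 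The step I expect to be the genuine obstacle is the complete graph with mixed multiplicities (and, similarly, the simple unbalanced complete graph): here no two vertices are non-adjacent, so the induced-path argument is unavailable, and I must instead show directly that every such graph is degree choosable by choosing absolute values as a proper coloring of the doubled-edge subgraph and then fixing the signs across the single edges — essentially a signed refinement of Brooks' theorem.
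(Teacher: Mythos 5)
Your architecture is sound and genuinely different from the paper's: instead of an induction on $|G|$ driven by the reduction $(G,L)/(v,c)$, you rest everything on the slack lemma (connected, $|L(u)|\geq d(u)$ everywhere and strict somewhere implies colorable), which gives (a) and (b) immediately; your treatments of (c) and (d), the reduction of (e) to the block case via the end-block colour-set splitting, the cherry argument when $\hat{G}$ is neither complete nor a cycle, and the classification of the cycles and of the all-doubled graphs $2K_n$, $2C_n$ are all correct. The gap is exactly where you flagged it, and it is genuine: the simple unbalanced complete graphs and the complete multigraphs with mixed multiplicities are not proved colorable. Your sketch --- properly color the doubled-edge subgraph $D$ by absolute values from $A=\set{|c|}{c\in C}$ and then ``fix the signs across the single edges'' --- does not close. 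Writing $\f(v)=\ep(v)\psi(v)$ with $\psi$ a proper $A$-coloring of $D$ and $\ep(v)\in\{\pm1\}$, single edges $vw$ with $\psi(v)\neq\psi(w)$ are automatic, but each color class of $\psi$ is independent in $D$ and hence a clique of single edges, and on such a clique the constraints $\ep(v)\neq\sg(vw)\ep(w)$ are a $2$-coloring of a signed complete graph, which exists only if that signed clique is antibalanced --- not guaranteed for an arbitrary $\psi$. (When $D=\ems$ this is the entire unbalanced simple complete case.) So this step is not a routine fix-up; it is the signed Brooks theorem itself, which your theorem is supposed to reprove.

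The paper closes these cases precisely because it runs the whole proof as an induction on $|G|$: after deleting a vertex $v$ it may apply statement (c) to the smaller uncolorable pair $(G-v,L')$. For instance, for a simple complete $G$ with symmetric list $C$, reducing across a negative edge with a positive color $c$ makes the neighbour's list $C\sm\{-c\}$ non-symmetric, whence (c2) forces $G-v$ to be balanced with explicit parts, and reading off the signs of the edges at $v$ shows $G$ itself is balanced; the induction hypothesis similarly kills mixed multiplicities, since $(G-v,L')$ being a regular block forces $\mu(u,v)$ to be constant. Your non-inductive setup forfeits this tool, so you must supply a direct argument. One is available within your own framework: extend the cherry argument to triangles. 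A mixed complete multigraph contains a triangle with a doubled and a single edge (every vertex meets both $D$ and $S$), and an unbalanced simple complete graph contains an unbalanced triangle; in either case one can pick $c_x\in L(x)$ and $c_z\in L(z)$ compatible with the edge(s) between $x$ and $z$ so that $x$ and $z$ jointly forbid at most $\mu(x,y)+\mu(y,z)-1$ colors at $y$ (for an unbalanced simple triangle take $c_x\neq 0$ and $c_z=\sg(yz)\sg(xy)c_x$; the requirement $c_z\neq\sg(xz)c_x$ is exactly unbalancedness), after which the slack lemma finishes on $G-x-z$. As written, however, the proposal does not prove these cases.
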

\begin{proof}
The proof is by induction on the order $|G|$ of $G$. It is obvious that the statements are true if $|G|=1$, so we can assume that $|G| \geq 2$.

In order to prove (a), we choose an arbitrary vertex $v \in V(G)$. The signed graph $G$ is connected and contains at least two vertices, so $G$ contains a non-separating vertex $w\not=v$ and $L(w) \neq \ems$. For any color $c \in L(w)$ we obtain an uncolorable pair $(G',L')=(G,L)/(w,c)$ by using the previously mentioned reduction. The induction hypothesis then implies $|L'(v)|=d_{G'}(v)$ and, therefore, $|L(v)| = d_G(v)$.

Suppose, contrary to our claim, that statement (b) is false. By reason of symmetry, we may assume that there are vertices $v,w \in V(G)$ which are joined by at least two positive edges. Deleting one of the two edges, results in a signed graph $G'$ such that $(G',L)$ is an uncolorable pair with $|L(v)|>d_{G'}(v)$, giving a contradiction to (a).

For the proof of (c), assume that $G$ is a block. First, let $e\in E_G(v,w)$ be an arbitrary edge of $G$. To see that  $L(v) = \sg(e) L(w)$, suppose this is false. By symmetry, we may assume that there is a color $c \in L(v) \sm \big( \sg(e) L(w) \big)$. Nevertheless, for the uncolorable pair $(G',L')=(G,L)/(v,c)$ we obtain that $d_{G'}(w) < |L'(w)|$ which yields a contradiction to (a).

Next, assume that $\sg=\pl$, that is, $G$ is a positive signed graph. Then for every edge $e\in E_G(v,w)$ we obtain that $L(v)=L(w)$. Since $G$ is connected, this implies that there exists a color set $C$ such that $L(v)=C$ for all $v\in V(G)$. Thus, (c1) holds.

Now, assume that $\sg \neq \pl$. We distinguish two cases. First, suppose that there exists a color set $C$ such that $L(v)=C$ for all $v\in V(G)$. By assumption, there is a negative edge $e$ in $G$, say $e\in E_G(v,w)$. Then $L(v)=-L(w)$ implying that the color set $C$ is symmetric. Second, suppose that $L$ is not a constant list-assignment. Since $G$ is connected and $L(v)=\pm L(w)$ depending on whether $E_G(v,w)$ contains a positive or negative edge, we conclude that there is a color set $C$ such that $L(v)= \pm C$ for all $v \in V(G)$. The color set $C$ is not symmetric, because
$L$ is not the constant list-assignment. Let $X=\set{v\in V(G)}{L(v)=C}$ and $Y=\set{v\in V(G)}{L(v)=-C}$. Since $G$ contains a negative edge and the color set $C$ is not symmetric, the sets
$X$ and $Y$ are non-empty and form a partition of $V(G)$. Due to the fact that any edge $e\in E_G(v,w)$ satisfies $L(v)=\sg(e)L(w)$, we conclude that an edge $e$ of $G$ is negative if and only if $e$ belongs to $E_G(X,Y)$. So $G$ is a balanced graph with parts $X$ and $Y$. This proves (c2).

As a consequence, we obtain that $|L(v)|$ is the same for all vertices $v$ of $G$. By (a), this implies that $G$ is regular. Thus, the proof of statement (c) is complete.

For the proof of (d) assume that $G$ is a block and $\mu(G)\geq 2$. By (c) it follows that $G$ is $r$-regular for an integer $r\geq 0$. Suppose, to the contrary, that $r$ is odd. Since $\mu(G)\geq 2$, it follows from (b) that $G$ contains a pair of differently signed parallel edges. Consequently, $\sg \not= \pl$ and $G$ is unbalanced. Hence, (c2) implies that there exists a symmetric color set $C$ such that $L(v)=C$ for all $v\in V(G)$. By (a), $|C|=r$ and, since $C$ is symmetric and $r$ is odd, $0\in C$. Now let $e,e'\in E_G(v,w)$ be two differently signed parallel edges. Then $(G',L')=(G,L)/(v,0)$ is an uncolorable pair with $|L'(w)|> d_{G'}(w)$, giving a contradiction to (a).
This proves (d).

It remains to prove statement (e). In order to analyze the block structure of the uncolorable pair $(G,L)$ with $|G|\geq 2$, we distinguish two cases.

\case{1}{$G$ contains a separating vertex $v$.} Then $G$ contains at least two end-blocks, say  $B_1$ and $B_2$. Each end-block $B_i$  contains a non-separating vertex $v_i$ of $G$. Obviously, there is a color $c_i\in L(v_i)$ and the induction hypothesis applied to the uncolorable pair  $(G_i,L_i)=(G,L)/(v_i,c_i)$ yields that every block of $G_i$ is a brick.
Since every block $B \neq B_i$ of $G$ is also a block of $G-v_i$, we conclude that every block of $G$ is a brick.

\case{2}{$G$ contains no separating vertex.} Then $G$ is a block and, by (c), $G$ is $r$-regular for an integer $r\geq 1$. Furthermore, it follows from (a) and (c) that there exists a color set $C$ of cardinality $r$ such that
$L(v)=\pm C$ for all $v \in V(G)$ (including the case that $L(v)=C$ for all $v \in V(G)$).

Let $v$ be an arbitrary vertex of $G$ and choose a color $c \in L(v)$.  By applying the induction hypothesis on $(G',L')=(G,L)/(v,c)$, we conclude that every block of $G'$ is a brick. This leads to the following two subcases.

\case{2.1}{$G-v$ contains no separating vertex.} Then $G'=G-v$ is a block and, by (b) and (c), $G'$ is regular of degree $r-2$ or $r-1$.

First, assume that $G'$ is $(r-2)$-regular. Since $G$ is $r$-regular, this implies that in $G$ every vertex of $G'$ is joined to $v$ by two parallel edges. Thus,
$$r = 2 \cdot |V(G)| - 2,$$
which is equivalent to
$$r-2 = 2 \cdot (|V(G)| - 1) - 2=2 \cdot |V'(G)| - 2.$$
According to (b), this is solely possible if $G = 2K_n$ and $G'=2K_{n-1}$.

Now, assume that $G'$ is $(r-1)$-regular. Then it follows from (d) that $G$ is simple (either $r$ is odd and both $G$ and $G'$ are simple or $r-1$ is odd). Hence, $\ul{G}$ as well as $\ul{G}'$ are complete graphs. It remains to verify that $G$ is balanced. This is evident if $G$ is positive or $|G|=2$. So we may suppose that $\sg\not=\pl$ and $|G|\geq 3$. By (c2), it suffices to consider the case that there exists a symmetric color set $C$ such that $L(v)=C$ for all $v\in V(G)$. As $\ul{G}$ is complete, it follows from (a) that $|C|=|G|-1\geq 2$. Consequently, $C$ contains a positive color $c$. Since $\sg\not= \pl$, there is a negative edge $e$ in $G$, say $e\in E_G(v,w)$. Consequently, $(G',L')=(G,L)/(v,c)$ is an uncolorable pair, where $\ul{G}'$ is a complete graph and $L(w)=C\sm \{-c\}$. Since the color set $C'=C\sm \{-c\}$ is not symmetric, we conclude from statement (c) applied to $(G',L')$ that $G'$ is a balanced graph with parts $X,Y$ such that $L(u)=C'$ for all $u\in X$ and $L(u)=-C'$ for all $u\in Y$. By the construction of $L'$, it follows that every edge of $E_G(v;X)$ is negative and every edge of $E_G(v,Y)$ is positive. Hence, $G$ is a balanced graph with parts $X, Y \cup \{v\}$.

\case{2.2}{$G-v$ contains a separating vertex.} Consequently, $G'$ contains at least two end-blocks and every end-block is regular of degree $r-1$ or $r-2$.

First, assume that $G'$ contains two end-blocks, say $B_1$ and $B_2$, which are both $(r-1)$-regular. Then we can easily conclude from (d) that $B_1$ and $B_2$ are simple (either $r$ is odd and hence $G$ is simple, or $r-1$ is odd and by repeated application of our reduction, we obtain an uncolorable pair $(B_i,L_i)$ implying that $B_i$ is simple). Consequently, at least $r-1$ vertices of $B_i$ are adjacent to $v$ in $G$. This leads to $2(r-1) \leq d_G(v)=r$ and, therefore, $r=2$. This shows that $G$ is a cycle.

It remains to show that the cycle $G$ is a brick, that is, $G$ is a balanced odd cycle or an unbalanced even cycle. If $\sg=\pl$, then it follows from (a) and (c1) that there is a set $C$ of two colors such that $L(v)=C$ for all $v\in V(G)$. Since the cycle $G$ is not $L$-colorable, we conclude that $G$ is an odd cycle and hence a brick. If $\sg\not=\pl$, then it follows from (a) and (c2) that $G$ is balanced with parts $X, Y$ and there is a set $C$ of two colors such that $L(v)=C$ for all $v\in X$ and $L(v)=-C$ for all $v\in Y$, or there is a symmetric set $C$ of two colors such that $L(v)=C$ for all $v\in V(G)$. In the first case, let $G'=G/Y$ and let $L'$ be the list assignment with $L'(v)=C$ for all $v\in V(G)$. Then the cycle $G'$ is not $L'$-colorable, since otherwise an $L'$-coloring $\f'$ of $G'$ would lead to an $L$-coloring $\f=\f'/Y$ of $G$, a contradiction. In this case, we conclude that $G$ is an odd balanced cycle and so $G$ is a brick.  In the second case, we distinguish two subcases. If the sign product of the cycle $G$ is positive, then $G$ is balanced and, as in the former case, we conclude that $G$ is an odd balanced cycle and hence a brick. If the sign product of the cycle $G$ is negative, then we conclude that $|G|$ is even and, therefore, $G$ is an unbalanced even cycle. Otherwise, $G$ would be an odd cycle and so $G$ would be an antibalanced odd cycle implying that $\scn(G)\leq 2$ (by Theorem\ref{Theorem:Harary:antibalanced}), contradicting the fact that $G$ is not $L$-colorable, where $L(v)=\{c,-c\}$ for all $v\in V(G)$ and $c\in \mz$ is a positive color. Consequently, in all cases, we conclude that $G$ is a brick.

Next, assume that $G'$ contains two end-blocks, say $B_1$ and $B_2$, such that $B_1$ is $(r-1)$-regular and $B_2$ is $(r-2)$-regular. Then we conclude, similar as in the previous case, that $B_1$ is simple and so at least $r-1$ vertices of $B_1$ are adjacent to $v$ in $G$. Since at least one vertex of $B_2$ is joined to $v$ by two parallel edges, we obtain $d_G(v)\geq r+1$, which is impossible.

Finally, assume that each end-block of $G'=G-v$ is $(r-2)$-regular. Let $B$ be an arbitrary end-block of $G'$ and let $v'$ be the only separating vertex of $G'$ contained in $B$. As $B$ is $(r-2)$-regular, it follows from (b) that $|B|\geq \frac{r-2}{2}+1$. Furthermore, every vertex $u$ of $B-v'$ is in $G$ joined to $v$ by two parallel edges. Hence, if $k$ is the number of end-blocks of $G'$, we obtain that $d_G(v)\geq 2k \frac{r-2}{2}=k(r-2)$. Since $r\geq 4$ and $d_G(v)=r$, this leads to $k=2$ and $r=4$. Since $v$ was an arbitrarily chosen vertex of $G$, we easily conclude that each block of $G'$ is a $2K_2$ implying that $G=2C_n$ for an integer $n\geq 3$.
Then $G$ is unbalanced and it follows from (a) and (c2) that there is a symmetric color set $C$ such that $|C|=4$ and $L(v)=C$ for all $v\in V(G)$.
As $(G,L)$ is an uncolorable pair, we easily conclude that $n$ is odd. Thus,  the proof of the theorem is complete.
\end{proof}

The above theorem provides a characterization of signed graphs which may occur in uncolorable pairs. Our next aim is to characterize the lists assignments which may occur in uncolorable pairs. The next lemma describes the lists in uncolorable pairs where the signed graph is a brick. The proof of this result is left to the reader, one may use the proof of the above theorem.

\begin{lemma}
\label{Lemma:ColorListsBricks}
Let $B$ be a brick and let $L$ be a list-assignment for $B$. Then the following statements hold:

\begin{itemize}
\item[\rm (a)] If $B$ is a balanced complete graph or a balanced odd cycle with parts $X, Y$, then $(B,L)$ is an uncolorable pair if and only if there is a color set $C$ such that $|C|=\De(B)$, $L(v)=C$ for all $v\in X$, and $L(v)=-C$ for all $v\in Y$.
\item[\rm (b)] If $B$ is an unbalanced even cycle, a $2K_n$ with $n\geq 2$, or a $2C_n$ with $n\geq 3$ odd, then $(B,L)$ is an uncolorable pair if and only if there exists a symmetric color set $C$ such that $|C|=\De(B)$ and $L(v) = C$ for all $v \in V(B)$.
\end{itemize}
\end{lemma}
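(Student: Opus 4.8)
The plan is to prove Lemma~\ref{Lemma:ColorListsBricks} by treating each brick type separately, and within each type verifying the two implications (necessity and sufficiency) of the stated characterization. The sufficiency direction is the easier half: given a list-assignment $L$ of the described form, I must exhibit that $B$ is not $L$-colorable. For each brick I would set up the coloring equations forced by the edges. For a balanced complete graph with parts $X,Y$ and $L(v)=C$ on $X$, $L(v)=-C$ on $Y$ with $|C|=\De(B)$, an $L$-coloring would have to assign to the $|C|+1$ vertices of the complete graph pairwise ``distinct'' colors (in the signed sense), where after switching $Y$ by negation the problem reduces to an ordinary proper coloring of $K_{|C|+1}$ from a common list of size $|C|$, which is impossible by pigeonhole. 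The odd-cycle and even-cycle cases reduce similarly to the classical fact that an odd cycle is not $2$-list-colorable from a constant list, using switching to normalize the signs, exactly as in Case~2.2 of the preceding theorem. For $2K_n$ and $2C_n$ with a symmetric $C$, the key point is that a pair of differently signed parallel edges between $v,w$ forces $|\f(v)|\ne|\f(w)|$, so the coloring induces an ordinary proper coloring into $|C|/2$ absolute values; a counting argument then forbids it.

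The necessity direction is where the real work lies, and here I would lean directly on Theorem~\ref{Theorem:unfarbbaresPaar}. Suppose $(B,L)$ is an uncolorable pair. Since $B$ is a brick, $B$ is $2$-connected and hence a single block, so part~(c) of that theorem applies: every edge $e\in E_B(v,w)$ satisfies $L(v)=\sg(e)L(w)$, and $B$ is regular, so by part~(a) all lists have common size $|L(v)|=\De(B)$. Moreover (c1) and (c2) tell me precisely the global shape of $L$: either a constant list $C$ (symmetric if $\sg\ne\pl$), or a bipartition into $C$ and $-C$ with $B$ balanced. The remaining task is to match these alternatives against the brick type. For the balanced bricks in~(a), Theorem~\ref{Theorem:Harary:balanced} gives the parts $X,Y$, and I must rule out the symmetric-constant possibility; for the unbalanced bricks in~(b), part~(d) forces even regularity and the unbalancedness forbids the balanced-bipartition alternative of (c2), leaving only the symmetric constant list.

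The main obstacle I anticipate is the bookkeeping needed to show that for each brick exactly one of the structural alternatives of (c1)/(c2) survives, and that the surviving one yields a genuinely uncolorable pair rather than merely a necessary condition. In particular, for a balanced odd cycle one must check that the symmetric-constant-list option does in fact admit a coloring (so it cannot produce an uncolorable pair), thereby forcing the $C$/$-C$ bipartition form; this is the antibalanced-odd-cycle observation invoked via Theorem~\ref{Theorem:Harary:antibalanced} at the end of Case~2.2 above. Dually, for an unbalanced even cycle and for $2K_n$, $2C_n$ one checks that the balanced-bipartition form cannot occur because these graphs are unbalanced, pinning down the symmetric list. Since the proof of Theorem~\ref{Theorem:unfarbbaresPaar} already carried out essentially all of these case distinctions while establishing that each block is a brick, the cleanest route is to extract and reuse those sub-arguments verbatim for each brick type; this is exactly why the authors remark that the proof is left to the reader and may follow the proof of the preceding theorem.
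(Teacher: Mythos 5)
Your overall strategy is the one the paper intends: the proof is left to the reader precisely because the necessity direction falls out of Theorem~\ref{Theorem:unfarbbaresPaar}(a),(c) (every brick is a block, so the lists are constant of size $\De(B)$ up to the sign pattern dictated by (c1)/(c2), and for the unbalanced bricks the balanced-bipartition alternative of (c2) is excluded outright), while sufficiency is a routine switching-plus-pigeonhole check for each brick type. Your handling of $2K_n$ and $2C_n$ via the observation that a digon forces $|\f(v)|\neq|\f(w)|$, and of the balanced bricks via switching at $Y$ to reduce to an ordinary $K_n$ or odd cycle with a common list of size $\De(B)$, is exactly right.

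One step in your plan would fail as written. For the balanced odd cycle you propose to eliminate the symmetric-constant-list alternative of (c2) by checking that such a list ``does in fact admit a coloring.'' It does not: if $L(v)=C=\{c,-c\}$ for all $v$, switching at $Y$ yields a positive odd cycle with the same constant list (since $-C=C$), and an odd cycle admits no proper $2$-coloring, so this pair is uncolorable. (The antibalanced-odd-cycle observation you cite applies to cycles with \emph{negative} sign product, where it forces the cycle to be even; a balanced odd cycle is not antibalanced.) Fortunately no elimination is needed: a symmetric constant list satisfies $L(v)=C$ on $X$ and $L(v)=-C$ on $Y$ trivially because $C=-C$, so it is already of the form required in statement (a); both alternatives of (c2) land in the desired conclusion. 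A second, smaller slip: the unbalanced even cycle cannot be switched to a positive cycle, so its sufficiency argument is not the ``odd cycle with a constant $2$-list'' fact but the parity argument of Case~2.2 of the theorem --- normalize so that exactly one edge is negative; a $\{c,-c\}$-coloring alternates along the positive path and, the cycle being even, necessarily violates the constraint $\f(v)\neq -\f(w)$ at the negative edge. With these two corrections your outline compiles into a complete proof.
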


The following lemma states that we can create a new uncolorable pair from two given uncolorable pairs by merging two of their vertices if we do it properly.

\begin{lemma}
\label{Lemma:UnionList}
Let $(G_1,L_1)$ and $(G_2,L_2)$ be two uncolorable pairs. Suppose that $G_1$ and $G_2$ have only vertex $v$ in common, $G=G_1 \cup G_2$, and that $L$ is the list-assignment of $G$ satisfying
$$L(u)=
\left\{ \begin{array}{ll}
L_i(u) & \mbox{\rm if } u\in V(G_i)\sm\{v\} \mbox{ with } i=1,2,\\
L_1(v) \cup L_2(v) & \mbox{\rm if } u=v.
\end{array}
\right.$$
for all $u\in V(G)$.
If $L_1(v) \cap L_2(v)=\ems$, then $(G,L)$ is an uncolorable pair.
\end{lemma}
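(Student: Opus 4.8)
The plan is to verify directly the three defining conditions of an uncolorable pair for the constructed pair $(G,L)$: that $G$ is a connected signed graph, that $|L(u)|\geq d_G(u)$ for every $u\in V(G)$, and that $G$ is not $L$-colorable. None of these is deep individually, and the whole content of the lemma is really concentrated in how the disjointness hypothesis $L_1(v)\cap L_2(v)=\ems$ interacts with the degree count at the shared vertex $v$.

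First I would dispose of connectivity: each $G_i$ is connected, being part of an uncolorable pair, and $G_1$ and $G_2$ share the vertex $v$, so their union $G=G_1\cup G_2$ is connected. Next I would check the degree condition. For a vertex $u\neq v$, say $u\in V(G_i)\sm\{v\}$, all edges incident with $u$ lie in $G_i$, so $d_G(u)=d_{G_i}(u)$ and $L(u)=L_i(u)$; hence $|L(u)|=|L_i(u)|\geq d_{G_i}(u)=d_G(u)$. The one delicate point is the shared vertex $v$: the edges of $G$ incident with $v$ split into those of $G_1$ and those of $G_2$, so $d_G(v)=d_{G_1}(v)+d_{G_2}(v)$. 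This is precisely where I would invoke the hypothesis $L_1(v)\cap L_2(v)=\ems$, for it gives $|L(v)|=|L_1(v)\cup L_2(v)|=|L_1(v)|+|L_2(v)|\geq d_{G_1}(v)+d_{G_2}(v)=d_G(v)$. Without disjointness the union could be strictly smaller and this bound could fail, so I expect this step — though short — to be the real heart of the statement.

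Finally I would establish that $G$ is not $L$-colorable by contradiction. Suppose $\f$ is an $L$-coloring of $G$. Then $\f(v)\in L(v)=L_1(v)\cup L_2(v)$, so $\f(v)\in L_i(v)$ for some $i\in\{1,2\}$; fix such an $i$. Restricting $\f$ to $V(G_i)$ yields a mapping that colors every vertex of $G_i$ from its $L_i$-list (for $u\neq v$ since $L(u)=L_i(u)$, and for $v$ by the choice of $i$), and that remains a proper coloring on every edge of $G_i$, because these are edges of $G$ carrying the same signs and $\f$ is a coloring of $G$. Thus $\f$ restricted to $V(G_i)$ is an $L_i$-coloring of $G_i$, contradicting the assumption that $(G_i,L_i)$ is an uncolorable pair. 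Hence no such $\f$ exists, and combining this with the previous two steps shows that $(G,L)$ is an uncolorable pair. I anticipate no obstacle beyond bookkeeping: the uncolorability follows from the clean restriction argument, and the only genuinely load-bearing use of the hypotheses is the disjointness in the degree count at $v$.
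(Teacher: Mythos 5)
Your proposal is correct and follows the same route as the paper: the paper dismisses connectivity and the degree bound as clear (your explicit degree count at $v$ using $L_1(v)\cap L_2(v)=\ems$ is exactly what makes that "clearly" true) and then proves non-$L$-colorability by the identical restriction argument. No gaps.
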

\begin{proof}
Clearly, $G$ is connected and $|L(w)|\geq d_G(w)$ for all $w\in V(G)$. It remains to show that $G$ is not $L$-colorable. Suppose this is false and there is an $L$-coloring $\f$ of $G$. Then $\f(v)\in L_i(v)$ for some $i\in \{1,2\}$. But then the restriction $\f_i$ of $\f$ to $G_i$ would be an $L_i$-coloring of $G_i$, a contradiction. Consequently, $G$ is not $L$-colorable and so $(G,L)$ is an uncolorable pair.
\end{proof}

It seems obvious that this method not only applies to one direction. Indeed, it is true that we can create new list-assignments $L_i$ from a preexisting list-assignment $L$ of an uncolorable pair $(G,L)$ such that each block $B_i \in \cB(G)$ along with $L_i$ forms a new uncolorable pair. This circumstances are proven in the next lemma. For a signed graph $G$ and a vertex $v$ of $G$, let $\cB_v(G)=\set{B\in \cB(G)}{v\in V(B)}$.

\begin{lemma}
\label{Lemma:BlockLists}
Let $(G,L)$ be an uncolorable pair. Then for each block $B \in \cB(G)$ there is a list-assignment $L_B$ such that $(B,L_B)$ is an uncolorable pair and $L(v) = \bigcup_{B \in \cB_v(G)}L_B(v)$ for all $v\in V(G)$.
\end{lemma}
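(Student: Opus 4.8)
The plan is to induct on the number of blocks $|\cB(G)|$. If $G$ has a single block, then $G$ is itself a brick and I may simply take $L_G=L$; so assume $G$ has at least two blocks, hence a cut vertex. Fix an end-block $B_0$ with unique cut vertex $v_0$, and let $G'=G-(V(B_0)\setminus\{v_0\})$ be the union of all remaining blocks, so that $G=B_0\cup G'$, the two pieces meet only in $v_0$, and both are connected. For a color $c\in L(v_0)$ call $c$ \emph{$B_0$-good} if $B_0$ admits an $L$-coloring sending $v_0$ to $c$, and \emph{$G'$-good} analogously; write $\mathrm{bad}_{B_0}$ and $\mathrm{bad}_{G'}$ for the complementary subsets of $L(v_0)$. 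The heart of the inductive step is to show that $\mathrm{bad}_{B_0}$ and $\mathrm{bad}_{G'}$ \emph{partition} $L(v_0)$ into sets of sizes $d_{B_0}(v_0)$ and $d_{G'}(v_0)$. Granting this, I set $L_{B_0}(v_0)=\mathrm{bad}_{B_0}$ with $L_{B_0}=L$ on the rest of $B_0$, set $L'(v_0)=\mathrm{bad}_{G'}$ with $L'=L$ on the rest of $G'$, and recurse on $(G',L')$.

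The covering half is immediate from connectivity: if some $c$ were both $B_0$-good and $G'$-good, the two colorings sending $v_0$ to $c$ would agree on $\{v_0\}$ and glue to an $L$-coloring of $G$, contradicting that $(G,L)$ is uncolorable. Hence no color is good for both sides, i.e.\ $\mathrm{bad}_{B_0}\cup\mathrm{bad}_{G'}=L(v_0)$.

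The size control is the step I expect to be the main obstacle, and here I would lean on Theorem~\ref{Theorem:unfarbbaresPaar}(a) applied to each piece. Suppose $|\mathrm{bad}_{G'}|\ge d_{G'}(v_0)+1$, pick $S\subseteq\mathrm{bad}_{G'}$ with $|S|=d_{G'}(v_0)+1$, and let $L_S$ agree with $L$ off $v_0$ and equal $S$ at $v_0$. Then $G'$ is connected, $|L_S(u)|\ge d_{G'}(u)$ for all $u$, and $G'$ is not $L_S$-colorable (such a coloring would color $v_0$ with a $G'$-good color); so $(G',L_S)$ is an uncolorable pair whose list at $v_0$ has size $d_{G'}(v_0)+1>d_{G'}(v_0)$, contradicting Theorem~\ref{Theorem:unfarbbaresPaar}(a). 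Thus $|\mathrm{bad}_{G'}|\le d_{G'}(v_0)$, and the identical argument on $B_0$ (whose non-cut vertices $u$ already satisfy $|L(u)|=d_G(u)=d_{B_0}(u)$, and $v_0$ has $|L(v_0)|=d_G(v_0)\ge d_{B_0}(v_0)$) gives $|\mathrm{bad}_{B_0}|\le d_{B_0}(v_0)$. Since $|L(v_0)|=d_G(v_0)=d_{B_0}(v_0)+d_{G'}(v_0)$ (the list size by Theorem~\ref{Theorem:unfarbbaresPaar}(a), the splitting because the edges at $v_0$ are partitioned among the blocks containing $v_0$), the covering identity forces equality throughout: both bad-sets have exactly their required sizes and are disjoint, so they partition $L(v_0)$.

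It then remains to assemble the pieces. By construction every color of $L_{B_0}(v_0)=\mathrm{bad}_{B_0}$ is $B_0$-bad and $|\mathrm{bad}_{B_0}|=d_{B_0}(v_0)$, so $(B_0,L_{B_0})$ is an uncolorable pair; likewise $(G',L')$ is an uncolorable pair with one fewer block, to which the induction hypothesis supplies list-assignments $L_B$ for $B\in\cB(G')=\cB(G)\setminus\{B_0\}$ satisfying $L'(v)=\bigcup_{B\in\cB_v(G')}L_B(v)$ for all $v\in V(G')$. A short case check according to whether $v=v_0$, $v\in V(B_0)\setminus\{v_0\}$, or $v\in V(G')\setminus\{v_0\}$ then confirms $L(v)=\bigcup_{B\in\cB_v(G)}L_B(v)$ everywhere, using $\cB_{v_0}(G)=\{B_0\}\cup\cB_{v_0}(G')$ together with the partition $L(v_0)=\mathrm{bad}_{B_0}\sqcup\mathrm{bad}_{G'}$. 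This closes the induction. Notably, this route needs only part~(a) of Theorem~\ref{Theorem:unfarbbaresPaar} and not the detailed list descriptions of Lemma~\ref{Lemma:ColorListsBricks}.
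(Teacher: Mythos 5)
Your proof is correct and follows essentially the same route as the paper's: an induction on the number of blocks that splits off an end-block at its cut vertex, defines the two sets of ``bad'' colors at that vertex (the paper's $C_1$ and $C_2$), shows they cover $L(v_0)$ by the gluing argument, and pins down their sizes via Theorem~\ref{Theorem:unfarbbaresPaar}(a) and the degree sum $d_G(v_0)=d_{B_0}(v_0)+d_{G'}(v_0)$. The only cosmetic differences are your passing to a subset $S$ of size $d_{G'}(v_0)+1$ to invoke part~(a) (the paper applies it directly) and the unneeded remark that a one-block $G$ is a brick.
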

\begin{proof}
The proof is by induction on the number of blocks of $G$. If $G$ has only one block, there is nothing to prove. So assume that $G$ has at least two blocks. Then $G$ has an end-block $G_1$ and there is exactly one separating vertex $v$ of $G$ contained in $G_1$. By defining $G_2=G-(V(G_1)\sm \{v\})$, we obtain
that $G=G_1 \cup G_2$, $G_1 \cap G_2 = (\{v\}, \ems)$ and $\cB(G_2)=\cB(G) \sm \{G_1\}$. For $i=1,2$, let $C_i$ be the set of all colors $c \in L(v)$ for which there exists no $L$-coloring $\f_i$ of $G_i$ with $\f_i(v)=c$. If there were a color $c \in L(v) \sm (C_1 \cup C_2)$ this would clearly lead to an $L$-coloring $\f_i$ of $G_i$ with $\f_i(v)=c$ for $i=1,2$ and so $\f = \f_1 \cup \f_2$ would be a proper $L$-coloring of $G$, giving a contradiction. Hence, $L(v)=C_1 \cup C_2$ holds. For the signed subgraph $G_i$ of $G$ with $i=1,2$, we define $L_i$ by
\begin{equation*}
L_i(u)=
\begin{cases}
L(u) & \text{if } u \in V(G_i) \sm \{ v \},\\
C_i & \text{if } u=v.
\end{cases}
\end{equation*}
Due to the choice of $C_i$, the graph $G_i$ is not $L_i$-colorable. Since $(G,L)$ is an uncolorable pair, $|L_i(u)| = |L(u)| \geq d_G(u) = d_{G_i}(u)$ for all $u \in V(G_i) \sm \{ v \}$. The signed graph $G_i$ is not $L_i$-colorable, thus, Theorem~\ref{Theorem:unfarbbaresPaar} yields $|C_i| = |L_i(v)| \leq d_{G_i}(v)$. Because $L(v)$ is the union of $C_1$ and $C_2$, we can additionally conclude that $|C_1| + |C_2| \geq |C_1 \cup C_2| = |L(v)| \geq d_G(v) = d_{G_1}(v) + d_{G_2}(v)$, and hence $|L_i(v)|=|C_i| = d_{G_i}(v)$. This implies that $(G_i,L_i)$ is an uncolorable pair for $i=1,2$. By the induction hypothesis, for each block $B\in \cB(G_2)$ there is a list-assignment $L_B$ of $B$ such that $(B,L_B)$ is an uncolorable pair and $L_2(u)=\bigcup_{B\in \cB_u(G_1)}L_B(u)$ for all $u\in V(G_2)$. Since $\cB(G)=\cB(G_2) \cup \{G_1\}$ and $(G_1,L_1)$ is an uncolorable pair with $L_1(v)\cap L_2(v)=\ems$, the desired result for the blocks of $G$ follows.
\end{proof}

Combining the results of this section, we obtain Theorem~\ref{Theorem:DegreeChoosable} saying that a signed graph $G$ is not degree choosable if and only if each block of $G$ is a brick.
\\

\pff{Theorem~\ref{Theorem:DegreeChoosable}}
{Let $G$ be a connected signed graph. First suppose that every block of $G$ is a brick. Then it follows by induction on the number of blocks of $G$, using Lemma~\ref{Lemma:ColorListsBricks} and Lemma~\ref{Lemma:BlockLists}, that there is a list-assignment $L$ of $G$ such that $(G,L)$ is an uncolorable pair and so $G$ is not degree choosable. Now suppose that $G$ is not degree choosable. Then there is a list-assignment $L$ of $G$ such that $|L(v)|=d_G(v)$ for all $v\in V(G)$ and $G$ is not $L$-colorable. Then $(G,L)$ is an uncolorable pair and it follows from Theorem~\ref{Theorem:unfarbbaresPaar} that every block of $G$ is a brick.
}

\begin{corollary}
\label{Corollary:Brooks2}
Let $G$ be a signed connected graph. If $G$ is not a brick, then $\sln(G)\leq \De(G)$.
\end{corollary}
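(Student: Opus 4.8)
The plan is to argue by contradiction, using Theorem~\ref{Theorem:unfarbbaresPaar} as the main engine. Since $\sln(G)\le \De(G)$ is precisely the assertion that $G$ is $\De(G)$-choosable, I would assume the contrary: that there is a $\De(G)$-assignment $L$, i.e.\ a list-assignment with $|L(v)|=\De(G)$ for every $v\in V(G)$, for which $G$ is not $L$-colorable. Because $|L(v)|=\De(G)\ge d_G(v)$ for all $v$, the pair $(G,L)$ is an uncolorable pair in the sense of this section, so every conclusion of Theorem~\ref{Theorem:unfarbbaresPaar} becomes available.

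First I would extract regularity. Part~(a) of Theorem~\ref{Theorem:unfarbbaresPaar} forces $|L(v)|=d_G(v)$ for all $v$, so $d_G(v)=\De(G)$ for every vertex; writing $r=\De(G)$, the graph $G$ is $r$-regular. Part~(e) tells me that every block of $G$ is a brick, and I would then use that each brick is itself regular: a balanced complete graph $K_n$ is $(n-1)$-regular, a balanced odd cycle or an unbalanced even cycle is $2$-regular, a $2K_n$ is $2(n-1)$-regular, and a $2C_n$ is $4$-regular. Thus every block of $G$ is a regular brick.

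Next I would rule out separating vertices by a degree count. Suppose $G$ had a separating vertex. Then $G$ has at least two end-blocks, so in particular there is an end-block $B$ containing exactly one separating vertex $c$ of $G$. Since $|B|\ge 2$, the block $B$ also contains a non-separating vertex $u$ of $G$, and $d_G(u)=d_B(u)=r$. As $B$ is a regular brick, it is $r$-regular, and hence $d_B(c)=r$ as well. But $c$ lies in at least one further block of $G$, which contributes at least one additional edge at $c$, whence $d_G(c)\ge r+1>r$, contradicting the $r$-regularity of $G$. Therefore $G$ has no separating vertex, so $G$ is a single block, which by part~(e) is a brick --- contradicting the hypothesis that $G$ is not a brick. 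Hence no such bad $\De(G)$-assignment exists and $\sln(G)\le \De(G)$.

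All the computations here are routine, being mere applications of Theorem~\ref{Theorem:unfarbbaresPaar}. The one point that genuinely needs care --- and the heart of the argument --- is the structural step showing that an $r$-regular connected signed graph all of whose blocks are (necessarily regular) bricks cannot possess a separating vertex. This rests on the simple but essential observation that a separating vertex accumulates degree from each of the at least two blocks passing through it, which is incompatible with every end-block already being $r$-regular.
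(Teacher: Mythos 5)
Your argument is correct and follows the paper's proof essentially verbatim: assume a bad $\De(G)$-assignment exists, observe that $(G,L)$ is an uncolorable pair, and apply Theorem~\ref{Theorem:unfarbbaresPaar} to get that $G$ is $\De(G)$-regular with every block a brick, whence $G$ has a single block and is itself a brick. The only difference is that you spell out the degree count at a separating vertex, which the paper leaves implicit in the phrase ``since every brick is regular, we then conclude that $G$ has only one block.''
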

\begin{proof}
Suppose this is false. Then there is a $\De(G)$-assignment $L$ of $G$ such that $G$ is not $L$-colorable. Consequently, $(G,L)$ is an uncolorable pair. From Theorem~\ref{Theorem:unfarbbaresPaar} it follows that $G$ is regular of degree $\De(G)$ and each block of $G$ is a brick. Since every brick is regular, we then conclude that $G$ has only one block implying that
$G$ is a brick.
\end{proof}

\section{List critical signed graphs}

Let $G$ be a signed graph and let $L$ be a list assignment of $G$. If $G$ is $L$-colorable, then every signed subgraph of $G$ is $L$-colorable, too. The signed graph $G$ is said to be {\em $L$-critical} if $G$ is not $L$-colorable, but every signed proper subgraph of $G$ is $L$-colorable. Furthermore, we say that $G$ is $k$-list-critical if there is a $(k-1)$-list-assignment $L$ for which $G$ is $L$-critical.

Let $k\geq 1$ be an integer. A signed graph $G$ is called {\em $k$-critical}  if $\scn(G)=k$ and every signed proper subgraph $G'$ of $G$ satisfies $\scn(G')\leq k-1$. Clearly, every $k$-critical signed graph $G$ is $L$-critical with respect to the constant list-assignment $L$ satisfying $L(v)=\Ga_{k-1}$ for all $v\in V(G)$. So every $k$-critical signed graph is
$k$-list-critical. If $G$ is a brick, then $G$ is regular, say of degree $r$, and it follows from Theorem~\ref{Theorem:unfarbbaresPaar} that $G$ is $(r+1)$-critical. A signed graph $G$ is called {\em $k$-choice-critical} if $\sln(G)=k$ and every signed proper
subgraph $G'$ of $G$ satisfies $\sln(G')\leq k-1$. Clearly, every $k$-choice-critical signed graph is $k$-list-critical.
The balanced complete graph of order $k$ is an example of a $k$-critical signed graph and for $k=1$ and $k=2$ there are no other $k$-critical signed graphs. Clearly, every signed graph with $\scn=k$ contains a $k$-critical signed subgraph. As a consequence of Theorem~\ref{Theorem:Harary:antibalanced} we obtain the following result.

\begin{lemma}
\label{Lemma:3Critical}
A signed graph $G$ is 3-critical if and only if $G$ is a balanced odd cycle or an unbalanced even cycle.
\end{lemma}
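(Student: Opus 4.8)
The plan is to prove Lemma~\ref{Lemma:3Critical} by establishing both implications, relying heavily on Theorem~\ref{Theorem:Bipartite}, which says that $\scn(G)\leq 2$ if and only if $G$ is antibalanced. For the backward direction, I would first observe that any balanced odd cycle $C$ has $\scn(C)=3$: since $C$ is not antibalanced (an odd cycle that is balanced cannot be antibalanced, as an antibalanced odd cycle has negative sign product while a balanced one has positive), Theorem~\ref{Theorem:Bipartite} gives $\scn(C)\geq 3$, and the upper bound $\scn(C)\leq \De(C)+1=3$ comes from Proposition~\ref{Proposition:Basic}. The analogous argument handles the unbalanced even cycle. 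Then criticality follows because every proper subgraph of a cycle is a disjoint union of paths, which is a forest, hence antibalanced (a forest has no cycles, so vacuously every cycle has the right sign product), so by Theorem~\ref{Theorem:Bipartite} each proper subgraph satisfies $\scn\leq 2$.

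For the forward direction, suppose $G$ is $3$-critical. Since $\scn(G)=3>2$, Theorem~\ref{Theorem:Bipartite} tells us $G$ is not antibalanced. By criticality, removing any edge or isolated-making any vertex drops the signed chromatic number to at most $2$, so every proper subgraph is antibalanced. The key structural step is to argue that $G$ must in fact be a single cycle. I would do this by showing $G$ has minimum degree at least $2$ (a vertex of degree $\leq 1$ could be colored last after coloring the rest with $2$ colors, contradicting $\scn(G)=3$; more carefully, a degree-one vertex can be deleted without changing antibalance obstructions, and criticality forces every vertex to matter) and that $G$ is connected with no vertex of degree $\geq 3$. The degree bound is the crux: if some vertex had degree $3$ or more, or if $G$ properly contained a cycle, one could exhibit a proper subgraph still failing to be antibalanced, contradicting criticality.

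The main obstacle I expect is pinning down precisely why a $3$-critical signed graph must be $2$-regular and connected, i.e. an actual cycle rather than some larger non-antibalanced graph. The cleanest route is probably to use Corollary~\ref{Corollary:Brooks2} together with the brick machinery: if $G$ is $3$-critical then $\scn(G)=3$, so $G$ cannot be $2$-choosable via the degree bound unless it is forced into a restrictive shape. Concretely, since $\scn(G)=3$, $G$ is not $2$-colorable with $\Ga_2=\{\pm1\}$, giving an uncolorable-type obstruction; invoking Theorem~\ref{Theorem:unfarbbaresPaar} (via the constant list-assignment $L(v)=\Ga_2$, so $|L(v)|=2$) would force, on the critical subgraph realizing the obstruction, that each block is a brick and the graph is $2$-regular, whence $\De(G)=2$ and $G$ is a cycle. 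Once $G$ is known to be a cycle that is not antibalanced, Theorem~\ref{Theorem:Harary:antibalanced} classifies the non-antibalanced cycles as exactly the balanced odd cycles and the unbalanced even cycles, completing the proof.

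I would close by checking that no other cycles qualify: a balanced even cycle is also antibalanced (all sign products of even cycles positive, matching the even case), and an unbalanced odd cycle is antibalanced (its single cycle is odd with negative sign product), so both have $\scn\leq 2$ and are excluded, leaving precisely the two stated families.
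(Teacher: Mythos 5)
Your backward direction is correct and in fact more self-contained than the paper's (which simply invokes Theorem~\ref{Theorem:unfarbbaresPaar} together with the remark that an $r$-regular brick is $(r+1)$-critical): computing $\scn(C)=3$ via Theorem~\ref{Theorem:Bipartite} and noting that every proper subgraph of a cycle is a forest, hence antibalanced, is a complete argument.

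The forward direction, however, has a genuine gap, and it sits exactly where you flag ``the main obstacle.'' The route you propose for showing that $G$ is a cycle --- applying Theorem~\ref{Theorem:unfarbbaresPaar} or Corollary~\ref{Corollary:Brooks2} to the constant list-assignment $L(v)=\Ga_2$ --- is not available at that point: an uncolorable pair requires $|L(v)|\geq d_G(v)$ for every vertex, i.e.\ $\De(G)\leq 2$, which is precisely the fact you are trying to establish. (Corollary~\ref{Corollary:Brooks2} likewise only yields $\sln(G)\leq\De(G)$, which is no contradiction to $\scn(G)=3$ when $\De(G)\geq 3$.) So neither tool, as invoked, rules out a $3$-critical signed graph containing a vertex of degree at least $3$. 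The fix is already latent in your own second paragraph and is exactly what the paper does: since $G$ is not antibalanced, by definition some cycle $C$ of $G$ has the wrong sign product, i.e.\ $C$ is a balanced odd cycle or an unbalanced even cycle; by your backward direction $\scn(C)=3$; and since $G$ is $3$-critical, no \emph{proper} subgraph of $G$ has signed chromatic number $3$, so $C$ cannot be proper and $G=C$. No degree, regularity, or connectivity argument is needed at all --- equivalently, your observation that every proper subgraph of $G$ is antibalanced already finishes the proof, because $C$ is not antibalanced and therefore cannot be a proper subgraph.
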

\begin{proof}
If $G$ is a balanced odd cycle or an unbalanced even cycle, then Theorem~\ref{Theorem:unfarbbaresPaar} implies that $G$ is 3-critical. Now assume that $G$ is a 3-critical signed graph. If $G$ contains no balanced odd cycle and no unbalanced even cycle, then $G$ is antibalanced and, by Theorem~\ref{Theorem:Harary:antibalanced}, $\scn(G)\leq 2$, a contradiction. So $G$ contains a signed subgraph $H$ which is a balanced odd cycle or an unbalanced even cycle. Then $\scn(H)=3$ and, since $G$ is 3-critical, we conclude that $G=H$.
\end{proof}

Before we establish some basic facts about list-critical signed graphs, we need some further notations. For a set $X\subseteq V(G)$, let $G[X]$ denote the signed subgraph of $G$ induced by $X$ so that $V(G[X])=X$ and $E(G[X])=E_G[X]$. Furthermore, for a vertex $v\in v(G)$ let $d_G(v:X)=|E_G(v,X)|$.

\begin{lemma}
\label{Lemma:ListCritical}
Let $G$ be an $L$-critical signed graph for an list-assignment $L$ of $G$. Furthermore, let $H=\set{v\in V(G)}{d_G(v)>|L(v)|}$, let $F=V(G)\sm H$, and let $\ems\not= X\subseteq F$. Then the following statements hold:
\begin{itemize}
\item[{\rm (a)}] $d_G(v)=|L(v)|$ for all $v\in X$.
\item[{\rm (b)}] Every block of $G[X]$ is a brick.
\item[{\rm (c)}] If $L$ is a $(k-1)$-list-assignment with $k\geq 1$, then $H\not=\ems$ or $G$ is a brick. Furthermore, if $G[X]$ contains a $K_k$, then $G$ is a balanced complete graph of order $k$.
\end{itemize}
\end{lemma}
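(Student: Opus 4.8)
The plan is to prove the three statements in order, extracting from part~(a) a degree identity that drives everything, making part~(b) the technical core via a residual-list reduction to an uncolorable pair, and deducing part~(c) from (a), (b) and the regularity information contained in Theorem~\ref{Theorem:unfarbbaresPaar}.

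First I would record two preliminary facts. An $L$-critical signed graph is connected (otherwise one of its components would be a proper, non-$L$-colorable subgraph, contradicting criticality), and it satisfies the usual lower degree bound $d_G(v)\ge |L(v)|$ for every vertex $v$. Indeed, if $d_G(v)<|L(v)|$, then $G-v$, being a proper subgraph, admits an $L$-coloring, and since the neighbours of $v$ forbid at most $d_G(v)<|L(v)|$ colours, this coloring extends to $v$, contradicting that $G$ is not $L$-colorable. Hence $F=\set{v\in V(G)}{d_G(v)=|L(v)|}$, which is exactly statement~(a) for every $X\subseteq F$.

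For (b) I would argue component by component. Fix a component $D$ of $G[X]$ and set $Y=V(G)\sm V(D)$. If $Y=\ems$, then $G=D$, the pair $(G,L)$ is already an uncolorable pair by (a), and Theorem~\ref{Theorem:unfarbbaresPaar}(e) applies directly. If $Y\ne\ems$, then $G[Y]=G-V(D)$ is a proper subgraph and so admits an $L$-coloring $\f$; I would build residual lists on $D$ by removing from $L(v)$, for each edge $e\in E_G(v,w)$ with $v\in V(D)$ and $w\in Y$, the single colour $\sg(e)\f(w)$. Since every such edge deletes at most one colour and every edge leaving $D$ ends in $Y$, part~(a) yields $|L_D(v)|\ge |L(v)|-d_G(v:Y)=d_G(v)-(d_G(v)-d_D(v))=d_D(v)$. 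An $L_D$-coloring of $D$ would combine with $\f$ to an $L$-coloring of $G$, so no such coloring exists; thus $(D,L_D)$ is an uncolorable pair and Theorem~\ref{Theorem:unfarbbaresPaar}(e) shows every block of $D$ is a brick. Letting $D$ range over all components of $G[X]$ gives (b). The hard part here is the degree bookkeeping: the bound on $|L_D(v)|$ works only because (a) pins the degree of each $v\in X$ to $|L(v)|$, and I must colour all of $G-V(D)$ (not merely $V(G)\sm X$) so that the chosen component $D$ itself is forced to be uncolorable, rather than only $G[X]$ as a whole.

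Finally, for (c) suppose $L$ is a $(k-1)$-assignment. If $H=\ems$, then by (a) every vertex has degree $k-1$, so $G$ is connected and $(k-1)$-regular with $\De(G)=k-1$; since $G$ fails to be $L$-colorable for the $(k-1)$-assignment $L$ and $k-1=\De(G)$, we have $\sln(G)>\De(G)$, and Corollary~\ref{Corollary:Brooks2} forces $G$ to be a brick. For the last assertion, if $G[X]$ contains a $K_k$ on a vertex set $S$, then each $v\in S$ already has $k-1$ neighbours inside $S$, while $d_G(v)=|L(v)|=k-1$ by (a); hence no edge leaves $S$ and $G[S]$ is a simple complete graph, so $S$ is a component of $G$ and $G=G[S]$ by connectivity. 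Then $(G,L)$ is an uncolorable pair consisting of a single block, hence a brick by Theorem~\ref{Theorem:unfarbbaresPaar}(e), and the only brick whose underlying graph is a simple complete graph is the balanced complete graph; thus $G$ is the balanced complete graph of order $k$.
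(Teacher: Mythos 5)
Your proof is correct and follows essentially the same approach as the paper: colour the complement of a component of $G[X]$, pass to residual lists, observe that the resulting pair is uncolorable, and invoke Theorem~\ref{Theorem:unfarbbaresPaar}. The only (harmless) deviations are that you obtain (a) directly from the standard greedy-extension argument rather than as a byproduct of the uncolorable-pair analysis, and that you settle the first claim of (c) via Corollary~\ref{Corollary:Brooks2} instead of the paper's one-block regularity argument.
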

\begin{proof}
For the proof it suffices to consider the case when $G[X]$ is connected. Clearly, $Y=V(G) \sm X$ is a proper subset of $V(G)$. Since $G$ is $L$-critical, this implies that there is an $L$-coloring $\f$ of $G[Y]$. For the remaining signed graph $G'=G[X]$, let $L'$ be the list-assignment defined by
$$L'(v)=L(v) \sm (\set{\f(u)}{u\in Y \cap N_G^+(v)} \cup \set{-\f(u)}{u\in Y \cap N_G^-(v)})$$
for all $v\in V(G')$. Since $G$ is not $L$-colorable, it follows that $G'$ is not $L'$-colorable. Since $X\subseteq F$, we conclude that every vertex $v\in X$ satisfies
$$|L(v)|\geq d_G(v)=d_{G'}(v)+d_G(v:Y)$$
and, therefore,
$$|L'(v)|\geq |L(v)|-d_G(v:Y)\geq d_{G'}(v).$$
Consequently, $(G',L')$ is an uncolorable pair and it follows from Theorem~\ref{Theorem:unfarbbaresPaar} that every block of $G'$ is a brick and $|L'(v)|=d_{G'}(v)$ for all $v\in X$, which implies that
$$|L(v)|=d_{G'}(v)+d_G(v:Y)=d_G(v)$$
for all $v\in X$. This proves (a) and (b). For the proof of (c) assume that $|L(v)|=k-1$ for all $v\in V(G)$. If $H\not=\ems$, then there is nothing to prove. So assume that $H=\ems$. Then $G=G[F]$ and, since $G$ is $L$-critical, we conclude that $G$ is connected. From (a) and (b) it follows that every block of $G$ is a brick and $G$ is regular of degree $k-1$. Since each brick is regular, this implies that $G$ consists only of one block and, therefore, $G$ is a brick. If $G[X]$ contains a complete graph of order $k$, then $d_G(v)=k-1$ for every vertex $v$ belonging to this complete graph. This obviously implies that this $K_k$ is a component of $G$. Since the $L$-critical graph $G$ is connected, this implies that $G$ is a complete graph of order $k$ and $(G,L)$ is an uncolorable pair. By Theorem~\ref{Theorem:unfarbbaresPaar}, it follows that $G$ is a balanced complete graph of order $k$.
\end{proof}

Let $G$ be a $k$-list-critical signed graph. Then Lemma~\ref{Lemma:ListCritical} implies that $\de(G)\geq k-1$ and so $|E(G)|\geq \tfrac{1}{2}(k-1)|V(G)|$, where equality holds if and only if $G$ is a brick.  Furthermore, it follows from Lemma~\ref{Lemma:ListCritical} that if $G'$ is the signed subgraph of a $k$-list-critical signed graph $G$ induced by the vertices having degree $k-1$ in $G$, then $G'=\ems$ or every block of $G$ is a brick. This leads to an improvement of the trivial lower bound for the number of edges in a $k$-list-critical graph with $k\geq 4$. Theorem~\ref{Theorem:EdgeNumber} provides such an improvement for the class of simple signed graphs; this theorem is a counterpart to a result about the number of edges in color critical graphs obtained by Gallai \cite{Gallai63a}.

For an integer $k\geq 4$, let $\cT_k$ denote the class of signed connected graphs $T$ such that $\mu(T)\leq 1$, $\De(T)\leq k-1$, every block of $T$ is a brick, and $T$ is not a balanced complete graph of order $k$. The following result is an extension of a similar result due to Gallai \cite{Gallai63a}.

\begin{lemma}
\label{Lemma:Gallaitree}
Let $k\geq 4$ be an integer. Then every signed graph $T\in \cT_k$ satisfies $$(k-2+\frac{2}{k-1})|V(T)|-2|E(T)|\geq 2$$
\end{lemma}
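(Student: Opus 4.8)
The plan is to recast the desired inequality in terms of vertex degrees and prove it by induction on the number of blocks of $T$. Since $2|E(T)|=\sum_{v\in V(T)}d_T(v)$, writing $\gamma=k-2+\frac{2}{k-1}$ the claim
$$\left(k-2+\tfrac{2}{k-1}\right)|V(T)|-2|E(T)|\geq 2$$
is equivalent to
$$\sum_{v\in V(T)}\big(\gamma-d_T(v)\big)\geq 2,$$
and I would call $\gamma-d_T(v)$ the \emph{charge} of $v$. Because $\mu(T)\leq 1$, the only bricks that can occur as blocks of $T$ are balanced complete graphs $K_j$ and (balanced odd or unbalanced even) cycles $C_\ell$; each such brick is regular, of degree $r_B=j-1$ for $K_j$ and $r_B=2$ for $C_\ell$. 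The bound $\De(T)\leq k-1$ already forces $j\leq k-1$ for every complete block $K_j$: a cut vertex of a $K_j$ block has degree $j-1$ inside the block plus at least one edge outside, and a $K_k$ block would have to be all of $T$, which is excluded.

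For the base of the induction, if $T=K_1$ the single charge is $\gamma\geq 2$, and if $T$ is a single brick $B$ the total charge is $|V(B)|(\gamma-r_B)$, which I claim is at least $2$. For $K_j$ this equals $j(\gamma+1-j)$, a concave function of $j$ whose minimum over $2\leq j\leq k-1$ is attained at $j=k-1$ and equals $(k-1)\cdot\frac{2}{k-1}=2$; for a cycle it equals $\ell(\gamma-2)\geq 3(\gamma-2)\geq 2$. Already here one sees why the constant $\frac{2}{k-1}$ is forced: it is exactly what makes a single $K_{k-1}$ attain equality.

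For the inductive step I assume $T$ has at least two blocks and pick an end-block $B$ with its unique cut vertex $v$. Deleting the $|V(B)|-1$ non-cut vertices of $B$ yields $T'=T-(V(B)\setminus\{v\})$, which is again a member of $\cT_k$ (it cannot be $K_k$, for the same degree reason as above), and comparing charges gives
$$\sum_{u\in V(T)}(\gamma-d_T(u))=\sum_{u\in V(T')}(\gamma-d_{T'}(u))+h(B),\qquad h(B)=(|V(B)|-1)(\gamma-r_B)-r_B,$$
where $-r_B$ records the charge lost at $v$. A direct check (treating triangles as $K_3$) shows $h(B)\geq 0$ for every admissible brick except $B=K_{k-1}$, so in all of those cases the induction closes at once.

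The crux, and the main obstacle, is the single bad case $B=K_{k-1}$, where $h(B)<0$. Here the degree bound must be used more sharply: since $d_B(v)=k-2$ and $d_T(v)\leq k-1$, the cut vertex $v$ has exactly one edge leaving $B$, and because $B$ is an end-block this edge $vw$ is a bridge. I would then delete the \emph{whole} block $B$ together with this bridge, obtaining a connected graph $T''=T-V(B)$ that still lies in $\cT_k$ (or equals $K_1$). Computing the net change in charge, the contributions of the $k-1$ deleted vertices of $B$ together with the unit of charge lost by $w$ telescope to
$$\big(\gamma-(k-1)\big)+(k-2)\cdot\tfrac{2}{k-1}-1=0,$$
so $\sum_{u\in V(T)}(\gamma-d_T(u))=\sum_{u\in V(T'')}(\gamma-d_{T''}(u))\geq 2$ by induction. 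This exact cancellation, that a $K_{k-1}$ block attached through a single bridge is charge-neutral, is precisely the phenomenon that the value $\frac{2}{k-1}$ is designed to capture, and it is the only genuinely nontrivial point; everything else is bookkeeping over the block tree.
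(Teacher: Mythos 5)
Your proposal is correct and follows essentially the same route as the paper: the same charge function $\sum_{v}(\gamma-d_T(v))$, induction on the number of blocks, the same base-case computation over balanced complete graphs and cycles, and the same two-way split in the inductive step, removing the interior of an end-block when its $h$-value is nonnegative and deleting an entire $K_{k-1}$ end-block together with its bridge in the exceptional charge-neutral case. The only difference is cosmetic (you branch on the type of an arbitrarily chosen end-block, while the paper first looks for an end-block that is not a $K_{k-1}$), so no further comparison is needed.
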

\begin{proof}
Throughout the proof let
$$r=k-2+\frac{2}{k-1}$$
and, for $T\in \cT_k$, let
$$m(T)=r|V(T)|-2|E(T)|=\sum_{v\in V(T)}(r-d_T(v)).$$
Our aim is to show that if $T\in \cT_k$, then $m(T)\geq 2$. The proof is by induction on the number of blocks of $T$. If $T$ consists of one block $B$, then $B$ is a balanced complete graph of order $b$ with $1\leq b \leq k-1$, or $B$ is a balanced odd cycle of order at least five, or $B$ is an unbalanced even cycle of order at least four.
If $B$ is a balanced complete graph of order $b$ with $1\leq b \leq k-1$, then we conclude that
$$m(B)=b(r-b+1)
\left\{ \begin{array}{ll}
\geq r & \mbox{\rm if } 1\leq b \leq k-2,\\
=2 & \mbox{\rm if } b=k-1.
\end{array}
\right.$$
Otherwise, $\ul{B}$ is a cycle of order at least four, and we conclude that
$m(B)\geq (r-2)4\geq r\geq 2$, since $k\geq 4$ and so $r\geq 2$.

For the induction step suppose that $T$ has at least two blocks. Let $\cB$ denote the set of all end-blocks of $T$. First suppose that there is a block $B\in \cB$ such that $\ul{B}$ is distinct from $K_{k-1}$. Let $v$ denote the only separating vertex of $T$ contained in $B$, and let $T'=T-(V(B)\sm \{v\})$. Then $T'\in \cT_k$ and the induction hypothesis implies that $m(T')\geq 2$. Since $T'$ and $B$ have only vertex $v$ in common, we obtain that $m(T)=m(T')+m(B)-r$. Clearly, $B$ belongs to $\cT_k$ and $\ul{B}\not=K_{k-1}$ implying that $m(B)\geq r$. Summarizing, this yields $m(T)\geq 2$.

Now suppose that every block $B\in \cB$ satisfies $\ul{B}=K_{k-1}$. Let $B$ be an arbitrary end-block of $T$, and let $v$ be the only separating vertex of $T$ contained in $B$. Since $\De(T)\leq k-1$ and $T$ has at least two end-blocks, we conclude that $v$ is contained in a block $B'$ such that $\ul{B'}=K_2$ and $T'=T-V(B)$ belongs to $\cT_k$. Then $m(T)=m(T')+m(B)-2$ and, by the induction hypothesis, $m(T')\geq 2$. Since $\ul{B}=K_{k-1}$, we obtain that $m(B)=2$. Summarizing, this yields $m(T)\geq 2$.
\end{proof}

\begin{theorem}
\label{Theorem:EdgeNumber}
Let $k\geq 4$ be an integer. If $G$ is a $k$-list-critical signed graph such that $\mu(G)\leq 1$ and $G$ is not a balanced complete graph of order $k$, then
$$2|E(G)|\geq (k-1 +\frac{k-3}{k^2-3})|V(G)|.$$
\end{theorem}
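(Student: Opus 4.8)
The plan is to run a Gallai-style edge-counting argument, using Lemma~\ref{Lemma:ListCritical} to control the subgraph induced by the vertices of minimum degree and Lemma~\ref{Lemma:Gallaitree} to bound the number of edges inside that subgraph. Fix a $(k-1)$-list-assignment $L$ for which $G$ is $L$-critical, and adopt the notation $H=\set{v\in V(G)}{d_G(v)>k-1}$ and $F=V(G)\sm H$ from Lemma~\ref{Lemma:ListCritical}. By part (a) of that lemma every vertex of $F$ has degree exactly $k-1$, so $\de(G)\geq k-1$ and every vertex of $H$ has degree at least $k$. If $F=\ems$ then $2|E(G)|\geq k|V(G)|$ and the claim is immediate since $\frac{k-3}{k^2-3}<1$; so I may assume $F\not=\ems$. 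Writing $h=|H|$ and $s=2|E(G)|-(k-1)|V(G)|=\sum_{v\in H}(d_G(v)-(k-1))$, the desired inequality is equivalent to $s\geq \frac{k-3}{k^2-3}|V(G)|$, and one trivial lower bound is already available, namely $s\geq h$, because each vertex of $H$ has excess degree at least one.

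The first substantive step is to show that every component $T$ of $G[F]$ belongs to the class $\cT_k$. Indeed $T$ is connected and satisfies $\mu(T)\leq 1$ and $\De(T)\leq k-1$; by Lemma~\ref{Lemma:ListCritical}(b) every block of $T$ is a brick; and $T$ cannot be a balanced complete graph of order $k$, for otherwise $G[F]$ would contain a $K_k$ and Lemma~\ref{Lemma:ListCritical}(c) would force $G$ itself to be a balanced complete graph of order $k$, contrary to hypothesis. Hence Lemma~\ref{Lemma:Gallaitree} applies to each $T$, and summing the inequality $r|V(T)|-2|E(T)|\geq 2$ (with $r=k-2+\frac{2}{k-1}$) over all components gives $r|F|-2|E_G[F]|\geq 0$, i.e.\ $2|E_G[F]|\leq r|F|$. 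Counting the degrees of the vertices of $F$ yields $|E_G(F,H)|=(k-1)|F|-2|E_G[F]|$, and combining these two facts produces the lower bound
$$|E_G(F,H)|\geq (k-1-r)|F|=\frac{k-3}{k-1}|F|=\frac{k-3}{k-1}\,(|V(G)|-h).$$

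The second step is an upper bound for the same quantity, obtained by counting from the $H$-side: since $d_G(v:F)\leq d_G(v)$ for every $v\in H$,
$$|E_G(F,H)|=\sum_{v\in H}d_G(v:F)\leq \sum_{v\in H}d_G(v)=(k-1)h+s.$$
Chaining the two estimates gives $\frac{k-3}{k-1}(|V(G)|-h)\leq (k-1)h+s$, so that $s\geq g(h)$ where $g(h)=\frac{k-3}{k-1}(|V(G)|-h)-(k-1)h$. Thus $s$ is bounded below by two functions of $h$: the increasing function $h$ (from the excess-degree count) and the decreasing function $g(h)$.

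The final step, which I expect to be the only delicate point, is to extract the exact constant $\frac{k-3}{k^2-3}$ by optimizing. The quantity $\max(h,g(h))$ is minimized where the two bounds coincide; solving $h=g(h)$ gives, after clearing denominators, $\frac{k^2-3}{k-1}\,h=\frac{k-3}{k-1}|V(G)|$, i.e.\ $h=\frac{k-3}{k^2-3}|V(G)|$, at which point both bounds equal $\frac{k-3}{k^2-3}|V(G)|$. Since $h$ is increasing and $g$ is decreasing, for the actual value of $h$ we have $s\geq\max(h,g(h))\geq\frac{k-3}{k^2-3}|V(G)|$, and translating back through $s=2|E(G)|-(k-1)|V(G)|$ gives the claimed bound. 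The main work is thus not in any single inequality but in arranging the Gallai-tree bound, the degree count on $F$, and the excess count on $H$ so that their combination is tight exactly at the stated constant; verifying that each component of $G[F]$ really lies in $\cT_k$, so that Lemma~\ref{Lemma:Gallaitree} is applicable, is the key structural input that makes the counting go through.
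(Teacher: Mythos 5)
Your proof is correct and follows essentially the same route as the paper: the same partition into $F$ (degree $k-1$) and $H$, the same verification via Lemma~\ref{Lemma:ListCritical} that each component of $G[F]$ lies in $\cT_k$, and the same application of Lemma~\ref{Lemma:Gallaitree}; your two bounds $s\geq h$ and $s\geq g(h)$ are exactly the paper's two inequalities $2e(V)\geq k|V|-|F|$ and $2e(V)\geq (k-\tfrac{2}{k-1})|F|$ rewritten in terms of the excess $s$. The only cosmetic difference is the final combination step, where you minimize $\max(h,g(h))$ over $h$ instead of taking the paper's weighted sum of the two inequalities; these are equivalent.
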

\begin{proof}
Let $V=V(G)$, and for a subset $X$ of $V$ let $e(X)=|E_G(X)|$. Our aim is to show that
$$2e(V)\geq (k-1 +\frac{k-3}{k^2-3})|V|.$$
Let $H=\set{v\in V(G)}{d_G(v)\geq k}$ and $F=\set{v\in V(G)}{d_G(v)=k-1}$. If $F=\ems$, then $2e(V)=k|V|$ and we are done. So assume that $F\not=\ems$. Then it follows from Lemma~\ref{Lemma:ListCritical} that $V(G)=H \cup F$ and every block of $G[F]$ is a brick. Furthermore, since $G$ is not a balanced complete graph of order $k$, we conclude from  Lemma~\ref{Lemma:ListCritical}(c) that $G[F]$ contains no $K_{k}$ as a subgraph. Since $\De(G[F])\leq k-1$ and $\mu(G)\leq 1$, this implies that each component of $G[F]$ belongs to $\cT_k$. From Lemma~\ref{Lemma:Gallaitree} it then follows that
$$(k-2+\frac{2}{k-1})|F|-2e(F)\geq 2.$$
On the one hand, this implies that
\begin{eqnarray*}
2e(V) &=& 2e(H)+2(k-1)|F|-2e(F)\\
&\geq& 2(k-1)|F|-2e(F) \geq (k-\frac{2}{k-1})|F|.
\end{eqnarray*}
On the other hand, we obtain that
\begin{eqnarray*}
2e(V) &=& (k-1)|V|+|H|+\sum_{v\in H}(d_G(v)-k)\\
&\geq& (k-1)|V|+|H|=k|V|-|F|.
\end{eqnarray*}
Multiplying the second inequality with $k-\frac{2}{k-1}$ and adding the first inequality, yields
$$2e(V)(1+k-\frac{2}{k-1})\geq k(k-\frac{2}{k-1})|V|$$
which is equivalent to
$$2e(V)\left(\frac{k^2-3}{k-1}\right)\geq \left(\frac{(k-1)(k^2-3)+k-3}{k-1}\right)|V|$$
and, hence, to
$$2e(V)\geq (k-1 +\frac{k-3}{k^2-3})|V|.$$
This completes the proof.
\end{proof}

\section{Concluding remarks}

Our results show that coloring of signed graphs behave in a very much similar way as ordinary vertex colorings of graphs. A result concerning the chromatic number whose proof mainly relies on the sequential coloring argument can quite often be transformed into a similar result about the signed chromatic number or the signed choice number of simple signed graphs. Typical examples of this fact are the characterization of degree choosable signed graphs and the characterization of uncolorable pairs. The proof of Theorem~\ref{Theorem:unfarbbaresPaar} resembles the proof of a similar result from \cite{KostochkaSW96}. Also Thomassen \cite{Thomassen94} famous proof that every planar graph is 5-choosable can be applied to signed simple planar graphs as shown by Jing, Kang and Steffen \cite{JingKS2015}. That every signed simple planar graph $G$ satisfies $\scn(G)\leq 5$ was observed by M\'a\v{c}ajov\'a, Raspaud and \v{S}koviera \cite{MaRS2015}. They also conjectured that we have in fact $\scn(G)\leq 4$ for every signed simple  planar graph.

It follows from Proposition~\ref{Proposition:Basic} that the signed choice number of every signed simple graph whose underlying graph can be embedded on a surface of Euler genus $\ep\geq 1$ is at most the Heawood number $H(\ep)=\lfloor(7+\sqrt{24\ep+1})/2\rfloor$. As proved by B\"ohme, Stiebitz and Mohar \cite{BohmeMS}, if $G$ is a graph embedded on a surface of Euler genus $\ep$ with $\ep\geq 1$ and $\ep\not=3$, then the choice number of $G$ is at most $H(\ep)$ and equality holds if and only if $G$ contains a complete subgraph of order $H(\ep)$. That the result also holds for $\ep=3$ was proved by Kr\'al' and \v{S}krekovski \cite{KralS}. It seems very likely that a similar map color theorem holds for the signed choice number, that is, if $G$ is a signed simple graph embedded on a surface of Euler genus $\ep$, where $\ep=2$ or $\ep\geq 4$, then $\sln(G)\leq H(\ep)$ and equality holds if and only if $G$ contains a balanced complete subgraph of order $H(\ep)$. If this is not true one may consider a minimal counterexample, that is, a signed simple graph $G$ embedded on a surface of
Euler genus $\ep$ such that $\sln(G)=k$ with $k=H(\ep)$ and $G$ does not contain a balanced complete subgraph of order $k$. Then $G$ is $k$-list-critical. If $n=|V(G)|$ and $m=|E(G)|$, we obtain from Euler's Formula that $2m\leq 6n - 12 +6\ep$ and, by Theorem~\ref{Theorem:EdgeNumber}, we obtain that
$$2m\geq \left( k-1 +\frac{k-3}{k^2-3} \right) n.$$
If $n\geq k+4$ and $\ep=2$ or $\ep\geq 4$, this leads to a contradiction as in the proof of the corresponding map color theorem in \cite{BohmeMS}. The open question is whether we can handle the case $k+1 \leq n \leq k+3$ similar as in \cite{BohmeMS} or whether we can improve the bound in Theorem~\ref{Theorem:EdgeNumber} to $2|E(G)|\geq (k-1)|V(G)|+k-3$.

If $G$ is an ordinary graph with chromatic number $k$, then in any optimal coloring of $G$ all $k$ colors are used. Furthermore, $\chi(G)\leq k$ if and only if $V(G)$ is the union $k$ pairwise disjoint independent sets (possibly empty). From the proof of Theorem~\ref{Theorem:G=2H} it follows that there are signed graphs $G$ with $\scn(G)=2k-1$ such that only $k+1$ colors are used in an optimal coloring of $G$. Clearly, if $G$ is a signed graph, then $\scn(G)\leq 2k$ if and only if $G$ has a partition into $k$ disjoint antibalanced signed subgraphs, and $\scn(G)\leq 2k+1$ if and only if $G$ has a partition into $k$ disjoint antibalanced signed graphs and one edgeless signed subgraph. So the colors different from zero form pairs and the color zero plays a particular role.

An edge coloring of an ordinary graph can be viewed as a vertex coloring of its line graph. For a signed graph $G$, the {\em signed line graph} of $G$ is the signed simple graph $H=L(G)$ such that $V(H)=E(G)$, $E(H)$ consists of all pairs $ee'$ of distinct edges of $G$ having a common end in $G$ and $\sg_H(ee')=\sg_G(e)\sg_G(e')$. Clearly, if $E^+$ is the set of positive edges of $G$ and $E^-$ is the set of negative edges of $G$, then $H=L(G)$ is a balanced signed graph with parts $E^+$ and $E^-$. This implies that $\scn(H)=\chi(\ul{H})$ and $\ul{H}$ is the ordinary line graph of $\ul{G}$. If $G$ is a simple signed graph, then it follows from Vizing's theorem that $\De(G) \leq \scn(L(G)) \leq \De(G)+1$.

\end{document}